\newcommand{\cut}[0]{c}
\newcommand{\lcut}[0]{\rule{0.4pt}{1ex}\hspace{1pt}\cut}
\newcommand{\rcut}[0]{\cut\hspace{1pt}\rule{0.4pt}{1ex}}
\newcommand{\bcut}[0]{\underline\cut}
\newcommand{\tcut}[0]{\overline\cut}
\newcommand{\hcut}[0]{h\cut}
\newcommand{\vcut}[0]{v\cut}
\newcommand{\Fb}[0]{B}
\newcommand{\Fd}[0]{{\mathbf{d}}}
\newcommand{\Fh}[0]{{\mathbf{h}}}
\newcommand{\Fv}[0]{{\mathbf{v}}}
\newcommand{\Fs}[0]{B}
\newcommand{\Rs}[0]{R}
\newcommand{\dist}{\mathrm{d}}
\newcommand{\disti}{\dist_I}
\newcommand{\distb}{\dist_b}
\newcommand{\barcode}{\mathcal{B}}
\newcommand{\barcodespace}{\overline{\mathcal{B}}}
\newcommand{\op}{\mathrm{op}}
\newcommand{\M}{M}
\newcommand{\MRs}{M}
\newcommand{\field}{\mathbf{k}}
\newcommand{\sR}[0]{\mathbb{R}}
\newcommand{\sQ}[0]{\mathbb{Q}}
\newcommand{\sZ}[0]{\mathbb{Z}}
\newcommand{\sX}[0]{\mathbb{X}}
\newcommand{\sN}[0]{\mathbb{N}}
\newcommand{\sU}[0]{\mathbb{U}}
\newcommand{\cV}[0]{\mathcal{V}}
\newcommand{\cF}[0]{\mathcal{F}}
\newcommand{\cG}[0]{\mathcal{G}}
\newcommand{\cS}[0]{\mathcal{S}}
\newcommand{\Hgr}[0]{\mathsf{H}}
\newcommand{\Ran}{\mathsf{Ran}}
\newcommand{\supp}[0]{\mathrm{supp}}
\DeclareMathOperator{\Ima}{Im}
\DeclareMathOperator{\Ker}{Ker}
\newcommand{\e}{\varepsilon}
\newcommand{\func}{f}
\theoremstyle{plain}
\newtheorem{thm}{Theorem}[section]
\newtheorem{lem}[thm]{Lemma}
\newtheorem{prop}[thm]{Proposition}
\newtheorem{cor}[thm]{Corollary}
\newtheorem{claim}[thm]{Claim}
\theoremstyle{definition}
\newtheorem{defn}[thm]{Definition}
\newtheorem{exmp}[thm]{Example}
\newtheorem{rem}[thm]{Remark}
\theoremstyle{remark}
\title{Decomposition of exact pfd persistence bimodules}
\author[J\'er\'emy Cochoy]{Jérémy Cochoy}
\email{jeremy.cochoy@inria.fr}
\author[Steve Oudot]{Steve Oudot}
\email{steve.oudot@inria.fr}
\address{Inria Saclay -- \^Ile-de-France}
\begin{document}

\begin{abstract}
We characterize the class of persistence modules indexed over $\sR^2$
that are decomposable into summands whose support have the shape of a
{\em block}---i.e. a horizontal band, a vertical band, an upper-right
quadrant, or a lower-left quadrant. Assuming the modules are pointwise finite dimensional (pfd), we show that they are
decomposable into block summands if and only if they satisfy a certain
local property called {\em exactness}. Our proof follows the same
scheme as the proof of decomposition for pfd persistence modules
indexed over $\sR$, yet it departs from it at key stages due to the
product order on $\sR^2$ not being a total order, which leaves some
important gaps open. These gaps are filled in using more direct
arguments. Our work is motivated primarily by the stability theory for
zigzags and interlevel-sets persistence modules, in which
block-decomposable bimodules play a key part. Our  results
allow us to drop some of the conditions under which that
theory holds, in particular the Morse-type conditions. 
\end{abstract}

\maketitle

\section{Introduction}

Decomposition theorems are one of the pillars of topological
persistence theory, as they provide sufficient conditions under which
topological descriptors for data, called {\em barcodes} or {\em
  diagrams}, can be defined. We refer the reader
to~\cite{o-ptfqrtda-15} for an introduction to persistence theory and
to the role played by decompositions therein.

The 1-dimensional setting is pretty well-understood by now. In this setting,
the objects of interest are the so-called {\em 1-d persistence
  modules}, which are functors from the real line~$\sR$ (equipped with
its natural order) to the vector spaces over a fixed field. The
category of such functors is abelian, and several
theorems~\cite{a-rtaa-74,botnan2015interval,cb,Gabriel72Quiver,rt-qf3-75,w-dgm-85}
identify conditions under which these functors decompose as direct
sums of {\em interval modules}---functors that are constantly equal to
the field over some interval and trivial elsewhere. Hence the notion of {\em barcode} of a
persistence module, defined as the collection of such intervals
appearing in its (essentially unique) decomposition.

By contrast, the higher-dimensional setting is much less
understood. While decomposition theorems
exist~\cite{botnan2018decomposition,r-izmir-6}, the underlying quivers
are {\em wild-representation type}, meaning that their sets of
indecomposables are hard to classify~\cite{cz-mp-09}. Hence the
difficulties to extend the concept of barcode to higher dimensions.

In this paper we consider the 2-dimensional setting and we
characterize the subclass of the $\sR^2$-indexed modules that
decompose into {\em block modules}, i.e. functors that are constantly
equal to the field on 2-d shapes called {\em blocks} (upper-right or
lower-left quadrants, vertical or horizontal bands) and trivial
elsewhere. We prove that these modules are precisely the ones that
satisfy a certain local property called {\em exactness} 
(Theorem~\ref{thm_key}). Thus, we provide a necessary and sufficient local
condition (exactness) by which a global property (decomposition into
block modules) can be checked.  We show some applications of this
result, most notably to the study of {\em zigzag persistence} and the
development of a stability theory for {\em interlevel-sets persistence}
based on earlier work by Bjerkevik, Botnan and
Lesnick~\cite{b-shdidpm-16,bl-aszpm-16}.

\subsubsection*{Related work.}

To our knowledge, our result was historically the first decomposition theorem
proposed for $\sR^2$-indexed persistence modules (or a certain class
thereof, not restricted to grid-indexed modules).  Since its first
appearance, a general Krull-Schmidt theorem for pointwise
finite-dimensional modules indexed over small categories has been proven by
M. Botnan and W. Crawley-Boevey~\cite{botnan2018decomposition}
following an approach developed by C. Ringel for locally finite
quivers~\cite{r-izmir-6}, from which another proof of our result can
be derived by studying the structure of the indecomposables.

Let us also mention a different and complementary approach to the problem,
which consists in defining barcodes via rectangle measures in the
plane. This definition does not require any decomposition, thus it
allows to relax somewhat the hypotheses on the considered persistence
module.  The approach was initiated in~\cite{chazal2012structure} in
the 1-d setting, then it was extended to exact 2-d modules
in~\cite{dan2017new,cdsm-phzp-16}. As it does not yield any
decomposition theorem, it cannot be combined with the stability theory
developed in~\cite{b-shdidpm-16,bl-aszpm-16}, however it constitutes a
serious alternative to our work.

The special case of interlevel-sets persistence yields exact 2-d modules with  some specific properties. Most notably,
the structure of such modules is fully  determined by the interval decomposition of their restriction to some zigzag along the anti-diagonal---by the Mayer-Vietoris theorem, assuming all homology degrees are considered at once. This fact was leveraged  in early work on the subject~\cite{bendich2013homology,carlsson2009zigzag}, which established a decomposition theorem for such modules in the discrete setting\footnote{By {\em discrete setting} we mean that the indexing set sits inside the integer lattice~$\sZ^2$. In~\cite{bendich2013homology,carlsson2009zigzag}  the result is established in the case where $\sZ^2$ is replaced with a finite grid, however the analysis extends to the entire lattice via the more recent interval decomposition theorem by Botnan~\cite{botnan2015interval} for $\sZ$-indexed zigzag modules.} via the interval decomposition of zigzag modules.
These early contributions were seminal in that they introduced the problem, proved a discrete analogue of our Theorem~\ref{thm_key} in the special case of interlevel-sets persistence, and provided some of the  key ideas that were exploited in later work such as~\cite{cdsm-phzp-16}.

\subsection*{Acknowledgements}

Theorem~\ref{thm_key} was introduced to us as a conjecture by
M. Botnan and M. Lesnick. We would like to thank them for leading us
to this question in the first place.  We would also like to thank the
reviewers for their insightful comments and suggestions, which greatly
helped shape the paper and simplify its exposition.  This work was
supported in part by ERC grant Gudhi (ERC-2013-ADG-339025) and by ANR
project TopData (ANR-13-BS01-0008).

\section{Main result}

Throughout the exposition, a field of coefficients is fixed and
denoted by $\field$. The set on which the vector spaces of our modules will be indexed is~$\sR^2$, equipped with the product order:
\[
\forall s,t\in\sR^2,\quad s\leq t \quad \Longleftrightarrow \quad s_x\leq t_x\ \mbox{and}\ s_y\leq t_y.
\]
A {\em persistence module} indexed over $\sR^2$ (or {\em persistence bimodule}\footnote{We abuse terminology here,  bimodules being a different concept in abstract algebra.}
for short) is a functor~$M$ from the poset $(\sR^2, \leq)$ to the category of vector spaces over~$\field$. By default we will
denote by $\M_t$, $t\in\sR^2$, its constituent vector spaces, and by $\rho_s^t$, $s\leq t\in\sR^2$, its constituent linear maps. For clarity, $\rho_s^t$ will be sometimes renamed $v_s^t$ when $s_x=t_x$ (`$v$' for `vertical'), and $h_s^t$ when $s_y=t_y$ (`$h$' for `horizontal').
For any $s\leq t\in\sR^2$ we have the following commutative diagram where 
the spaces and maps are taken from~$M$:
\begin{equation}\label{eq:quadrangle}
\begin{gathered}
\xymatrix{
\M_{(s_x, t_y)}\ar^-{h_{(s_x, t_y)}^t}[rr] && \M_t\\\\
\M_s\ar_-{h_s^{(t_x, s_y)}}[rr]\ar^-{v_s^{(s_x, t_y)}}[uu] && \M_{(t_x, s_y)}\ar_-{v_{(t_x, s_y)}^t}[uu]
}
\end{gathered}
\end{equation}
$M$ is called {\em pointwise finite-dimensional} (pfd) if $\M_t$ is
finite-dimensional for every $t\in\sR^2$. It is called {\em exact} if,
for every $s\leq t\in\sR^2$, the following sequence induced by diagram~(\ref{eq:quadrangle}) is
exact (i.e. $\Ima \phi = \Ker \psi$):
\[\xymatrix@C=110pt{
\M_s \ar^-{\scriptstyle \phi\,=\,\left(h_s^{(t_x, s_y)},\; v_s^{(s_x, t_y)}\right)}[r] & \M_{(t_x, s_y)} \oplus \M_{(s_x, t_y)} \ar^-{\psi\,=\,v_{(t_x, s_y)}^t - h_{(s_x, t_y)}^t}[r] & \M_t.
}\]
There are several ways in which this condition can be interpreted, including:
\begin{itemize}
\item At a low level, $\Ima\phi\subseteq\Ker\psi$ means that diagram~(\ref{eq:quadrangle}) commutes, while $\Ima\phi\supseteq\Ker\psi$ means that every element of $M_t$ that has preimages in $M_{(t_x,s_y)}$ and $M_{(s_x,t_y)}$ has a preimage common to both in~$M_s$.
\item At a higher level, exactness of the sequence means that diagram~(\ref{eq:quadrangle}) is a weak form of pushout or pullback square, in which surjectivity of~$\psi$  or injectivity of~$\phi$ are not required.
\end{itemize}
The exactness condition also implies (and is stronger than) the following equalities, which will be instrumental in our analysis:
\begin{equation}\label{eq:weak_exactness}
  \begin{gathered}
\begin{array}{rcl}
  \Ker \rho_s^t &=& \Ker h^{(t_x,s_y)}_s + \Ker v^{(s_x, t_y)}_s  \\[0.5em]
  \Ima \rho_s^t &=& \Ima v_{(t_x,s_y)}^t  \cap \Ima h_{(s_x, t_y)}^t
  \end{array}
  \end{gathered}
  \end{equation}

%
In this paper we are interested in
exact pfd bimodules. In some places our analysis extends to modules satisfying~(\ref{eq:weak_exactness}), for which the natural shapes to consider are rectangles.

\subsection*{Rectangles, blocks, and their associated modules}

We use {\em
  cuts} to parametrize the rectangles in the plane.
A cut is a partition $\cut$ of
$\sR$ into two (possibly empty) sets $\cut^-, \cut^+$ such that $x<y$
for all $x\in \cut^-$ and $y\in \cut^+$. For instance, $\cut =(\cut^-,
\cut^+)$ with $\cut^- = (-\infty, 1]$ and $\cut^+ = (1, +\infty)$ is a
  cut. A cut $\cut$ with either $\cut^+=\emptyset$ or
  $\cut^-=\emptyset$ is said to be at infinity or trivial.

A non-empty rectangle~$R$ in the plane is then uniquely defined by
four cuts: two horizontal (say $\lcut$ and $\rcut$, standing respectively for {\em left cut} and {\em right cut}), and two vertical
(say $\bcut$ and $\tcut$, standing respectively for {\em bottom cut} and {\em top cut}), so that $R = (\lcut^+\cap
\rcut^-) \times (\bcut^+ \cap
\tcut^-)$. Note that $R$ may not necessarily be open or closed, in
fact the nature of each cut determines which boundaries belong to the
rectangle.

To any rectangle~$\Rs$ we associate a unique {\em rectangle module}
$\field_{\Rs}$ having a copy of the field $\field$ at every point
$t\in\Rs$ and zero vector spaces elsewhere, the copies of $\field$
being connected by identities and the rest of the maps being zero. It
is easily seen that any such bimodule is pfd and satisfies the
equalities of~(\ref{eq:weak_exactness}). However, not every rectangle module is exact, as illustrated in Figure~\ref{fig:rect_not-exact}.

\begin{figure}[htb]
  \centering
  \includegraphics[scale=0.7]{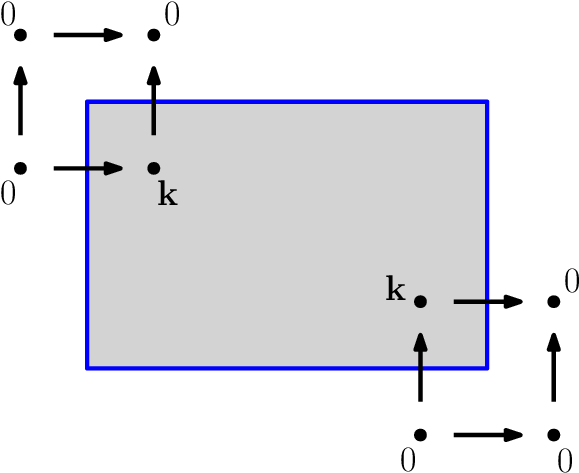}
  \caption{A rectangle (in gray with blue boundary) and its corresponding rectangle module, which is not exact around the upper-left and bottom-right corners---the highlighted squares have $\Ima\phi=0\not\simeq \field\simeq\Ker\psi$.}
  \label{fig:rect_not-exact}
\end{figure}

Among the rectangles with at least two cuts at infinity, we
distinguish the following four types, illustrated in
Figure~\ref{fig:blocks}:
\begin{itemize}
\item {\em birth quadrants} (shorthand: bquad), for which $\rcut^+=\tcut^+=\emptyset$;
\item {\em death quadrants} (shorthand: dquad), for which $\lcut^-=\bcut^-=\emptyset$;
\item {\em horizontal bands} (shorthand: hband), for which $\lcut^-=\rcut^+=\emptyset$;
\item {\em vertical bands} (shorthand: vband), for which $\bcut^-=\tcut^+=\emptyset$.
\end{itemize}
\begin{figure}[htb]
  \centering
  \includegraphics[width=\textwidth]{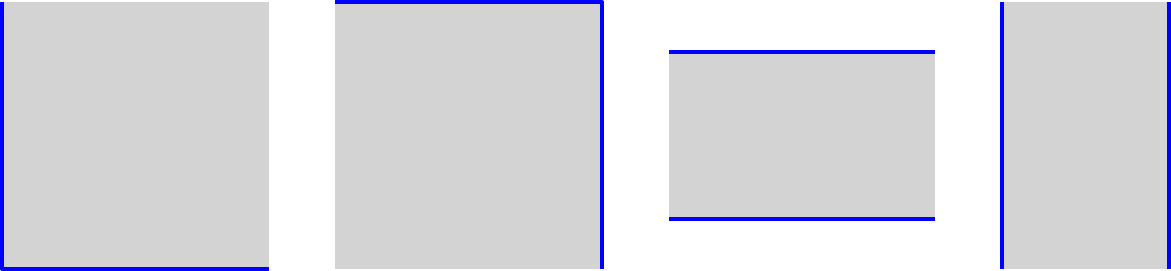}
  \caption{The four block types. From left to right: birth quadrant (bquad), death quadrant (dquad), horizontal band (hband), vertical band (vband).}
  \label{fig:blocks}
\end{figure}
A {\em block} is a rectangle of any of these types. Note that these
types are not mutually exclusive, for instance $\sR^2$ belongs to all
four of them. The rectangle module associated to a given block is
called a block module.  It is immediate that any such bimodule is both
pfd and exact, and that, among the rectangles in the plane, only those
that are blocks yield exact modules---see the counterexample in
Figure~\ref{fig:rect_not-exact}. Since being exact is invariant under
taking direct sums, any pfd bimodule that is decomposable into block
summands (or {\em block-decomposable} for short) is also exact.  Our
main result states that the converse is also true:
\begin{thm}[Decomposition of exact pfd bimodules]\label{thm_key}
Any exact pfd  bimodule $M$ decomposes as a direct sum of block modules: 
\begin{equation}\label{eq:decomp}
M \simeq \bigoplus\limits_{\Fb\in\barcode(M)} \field_{\Fb},
\end{equation}
where $\barcode(M)$ is some multiset of
blocks that depends on $M$. The decomposition is unique up to
isomorphism and reordering of the terms.
\end{thm}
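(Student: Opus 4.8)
\medskip
\noindent\textbf{Proof plan.}
The plan is to follow the two-stage strategy of~\cite{cb}: first split $M$ into indecomposable summands, then identify each indecomposable exact pfd bimodule with a block module.

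\emph{Stage 1: reduction to the indecomposable case.} Being pfd, $M$ satisfies the ascending and descending chain conditions on kernels and images, so I would adapt the decomposition argument of~\cite{cb} to obtain an isomorphism $M \simeq \bigoplus_{i\in I} N_i$ in which every $N_i$ is indecomposable with local endomorphism ring. Two observations make this reduction legitimate. First, exactness is inherited by direct summands: the sequence attached to a quadrangle~\eqref{eq:quadrangle} for $M$ is the direct sum over $i$ of the corresponding sequences for the $N_i$, and a direct sum of two-term sequences is exact if and only if each summand is; hence every $N_i$ is again exact (and visibly pfd). Second, each block module $M_{\Fs}$ is exact and pfd, has connected support, and therefore has endomorphism ring $\field$ (an endomorphism is a scalar on each stalk, and commutation with the identity maps along a connected support forces a single scalar), which is local. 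Consequently, once Stage 2 shows that each $N_i$ is a block module, the Krull--Remak--Schmidt--Azumaya theorem yields both the existence of the decomposition~\eqref{eq:decomp} and its uniqueness up to isomorphism and reordering of the terms.

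\emph{Stage 2: indecomposable exact pfd bimodules are blocks.} Let $N$ be indecomposable, exact and pfd, with support $S=\{t\in\sR^2 : N_t\neq 0\}$. The first step is to squeeze structural information on $S$ out of exactness: if $s<t$ lie in $S$ but both corners $(t_x,s_y)$ and $(s_x,t_y)$ lie outside $S$, the exact sequence induced by~\eqref{eq:quadrangle} collapses and forces $N_t=0$, a contradiction; refining this kind of argument, I would show that $S$ is order-convex and sufficiently ``directed'' that the only possibilities are the supports of the four shape types $\fb,\fd,\fh,\fv$. The second step is the construction of a block summand: choosing $0\neq x\in N_{t_0}$ for a well-chosen $t_0\in S$, I would build a pointwise one-dimensional sub-bimodule $N'\subseteq N$ by transporting $x$ forward along the maps $\rho$ and backward along chosen preimages, using exactness to guarantee that these local choices are mutually compatible over the two-dimensional region they span, so that $N'$ is well defined and isomorphic to a block module $M_{\Fs}$. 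A level-wise choice of complements, exactly as in~\cite{cb}, would then exhibit $N'$ as a direct summand of $N$, and indecomposability would force $N=N'\simeq M_{\Fs}$.

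\emph{The main obstacle.} The hard part is the coherent-propagation step of Stage 2, and it is precisely where the product order makes the $\sR$-indexed argument break down. Over $\sR$ one extends $x$ forward and backward along a single chain, so functoriality alone guarantees that the transported elements agree; over $\sR^2$ the set of points reachable from $t_0$ is a genuinely two-dimensional up-set (dually, down-set), and the images and preimages of $x$ obtained along different monotone paths need not match on overlaps. Reconciling these branches is exactly what exactness --- the Mayer--Vietoris-type constraint on every quadrangle~\eqref{eq:quadrangle} --- achieves, but turning this into a clean inductive construction of the summand while simultaneously tracking the four shape types, the open/closed and finite/infinite boundary decorations, and the pfd bookkeeping of complements, is where the real effort goes. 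Everything else --- summands of exact modules are exact, block modules are exact pfd with endomorphism ring $\field$, and Krull--Remak--Schmidt--Azumaya for modules with local endomorphism rings --- is routine.
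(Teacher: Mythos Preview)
Your two-stage plan diverges from the paper's approach and has genuine gaps at both stages.

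\emph{Stage 1.} The reference~\cite{cb} treats only $\sR$-indexed modules; its argument relies on the total order and does not adapt routinely to~$\sR^2$. A general Krull--Remak--Schmidt theorem for pfd persistence modules over arbitrary posets does exist, but it is a separate nontrivial result, not in~\cite{cb}. The paper sidesteps this issue entirely: it never first decomposes $M$ into abstract indecomposables. Instead, for every shape~$\Fs$ it directly constructs a submodule $W_\Fs\subseteq M$ (via cuts, the functorial subspaces $\Ima^\pm_{\Fs,t}$ and $\Ker^\pm_{\Fs,t}$, and inverse limits), shows each $W_\Fs$ is a direct sum of copies of~$M_\Fs$, and then proves $M=\bigoplus_\Fs W_\Fs$ as an internal direct sum.

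\emph{Stage 2.} Your support argument is incorrect. If $s<t$ and both off-diagonal corners vanish, the sequence from~\eqref{eq:quadrangle} reads $N_s\to 0\to N_t$, and exactness there says only $\Ima(N_s\to 0)=\Ker(0\to N_t)$, i.e.\ $0=0$; it places no constraint on~$N_t$. So this local-vanishing reasoning does not force the support of an indecomposable to be a block shape, and without the target shape known in advance your propagation scheme has nowhere to aim. More importantly, the ``coherent propagation'' you rightly flag as the main obstacle is exactly what the paper's machinery is built to handle, and it does so by working with functorial \emph{subspaces} $V^\pm_{\Fs,t}$ determined by cuts rather than by transporting a single element. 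The substantive content --- the covering of~$\M_t$ by the family of sections (Proposition~\ref{bigcover}) and the direct-sum verification (Propositions~\ref{prop:direct_sum_in} and~\ref{prop:direct_sum_inter}) --- is precisely where the one-dimensional arguments of~\cite{cb} break down (strong covering and disjointness of sections both fail over~$\sR^2$, cf.\ Example~\ref{ex:strongcover-disjoint}) and must be replaced by new, more direct arguments. Your sketch does not engage with any of this, and the ``level-wise choice of complements'' you invoke to split off~$N'$ faces the same coherence problem across incomparable points.
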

Thus, among the pfd bimodules, the ones that are block-decomposable
are precisely the ones that are exact. Some applications of this
result are described in Section~\ref{sec:applications}. Among them,
the study of the stability of {\em zigzags} in the context of {\em
  interlevel-sets persistence} (Section~\ref{sec:interlevel-sets})
served as the initial motivation for this work. Exactness in that
setting is ensured by the Mayer-Vietoris theorem.

\subsection*{Proof outline}

The bulk of the paper (Sections~\ref{sec:cuts} to \ref{sec:completion})
is devoted to the proof of Theorem~\ref{thm_key}. The uniqueness of
the decomposition is a straightforward consequence of Azumaya's
theorem~\cite{a-kst-50}, the endomorphism ring of any block module
being clearly isomorphic to $\field$ and therefore local.
It remains to prove the existence of a decomposition, for which we
follow the same scheme as in the 1-d setting~\cite{cb}, with some significant adjustments at each step due to the product order on~$\sR^2$ not being total:
\begin{itemize}
\item In Sections~\ref{sec:cuts} and~\ref{sec:counting functor}, to
  each rectangle~$\Rs$ in the plane we associate a {\em counting
    functor} $C_\Rs$ (see~(\ref{eq:CR})) that maps pfd persistence
  bimodules satisfying~(\ref{eq:weak_exactness}) to $\field$-vector
  spaces. This functor captures the elements whose lifespan is
  exactly~$\Rs$ in such a module~$M$. In particular, if we assume~$M$
  to be decomposable into rectangle summands, then the dimension of
  the space~$C_\Rs(M)$ is the same as the multiplicity of the
  summand~$\field_\Rs$ in the decomposition of~$M$
  (Lemma~\ref{lem:C_f_mult}).  The functor is built using the so-called {\em
    functorial filtration} technique
  (see e.g.~\cite{ringel1975indecomposable}), which consists in filtering each
  space~$M_t$ by the kernels and images of the internal morphisms
  $\rho_s^t$ (for $s\leq t$) and $\rho_t^u$ (for $u\geq t$).  The
  technique was used in the 1-d setting, where the total order on the
  real line made it simple to show that the image and kernel subspaces
  get transported from one index to the other within $\Rs$, which is
  the key property to define the counting functor. Here the
  transportation property is preserved
  (Corollary~\ref{cor:transportation}) modulo some adjustments to the
  definitions of the image and kernel subspaces, leveraging the
  equalities in~(\ref{eq:weak_exactness}). See~(\ref{eq:fs_ima_ker})
  for the precise definitions, Remark~\ref{rem:IK} for the underlying
  intuition, and Example~\ref{ex:IK} for an illustrative example.
\item From Section~\ref{sec:submodules} onward, we assume that the
  bimodule~$M$ is pfd and exact. In order to build an explicit
  internal decomposition of~$M$, to any block~$\Fs$ we associate a 
  submodule $\MRs_\Fs$ of~$M$ whose structure is that of a direct sum
  of $\dim C_\Fs(M)$ copies of~$\field_\Fs$ (see Lemma~\ref{lem:block-copies}). As in the 1-d setting, the construction of~$\MRs_\Fs$ leverages that of the counting functor~$C_\Fs$, and it boils down to choosing some vector-space complement in a certain inverse limit. However, in contrast to the 1-d case, the vector-space complement cannot be chosen arbitrarily, as care must be taken of the way the image of the complement through the cone maps transitions outside the block~$\Fs$ (see Proposition~\ref{prop:MRs}). 
\item In Section~\ref{sec:direct_sum} we show that the submodules
  $\MRs_{\Fs}$ are in direct sum, which it is sufficient to check
  pointwise at every index $t\in\sR^2$. The proof in the 1-d setting
  uses the concept of {\em disjoint sections} of a vector space, from
  which the direct sum follows. In our setting, while the submodules
  associated to bands satisfy the disjointness property leveraged in
  1-d, the whole family of
  submodules~$\{\MRs_\Fs\}_{\Fs:\mathrm{block}}$ does not (see
  Example~\ref{ex:disjoint}). We therefore resort to more direct
  arguments, based on the observation that modules associated to
  different blocks have different supports, so that showing the direct
  sum amounts for the most part to showing that, for a finite family
  of linearly related block modules, there is one whose support
  extends further than the others to the right or to the top
  (Propositions~\ref{prop:direct_sum_in}
  and~\ref{prop:direct_sum_inter}).  This fact is not true in all
  cases, but sufficienty widely so that the special cases
  can be handled individually using exactness.
\item In Sections~\ref{sec:sections} and~\ref{sec:completion} we show
  that the submodules $\MRs_{\Fs}$ generate the whole module~$M$,
  which it is also sufficient to check pointwise at every index
  $t\in\sR^2$.  The proof in the 1-d setting uses the concept of {\em
    covering sections}, from which the result follows. In our setting,
  the family of submodules~$\{\MRs_\Fs\}_{\Fs:\mathrm{block}}$ does
  not satisfy the covering property (see Example~\ref{ex:cover}),
  unless we remove from~$M$ those elements that live since $-\infty$
  both horizontally and vertically (Proposition~\ref{bigcover}). We
  therefore study separately the submodule generated by those
  elements, showing by a duality argument that it itself decomposes as
  a direct sum of block modules (Corollary~\ref{cor:N_decomp}).  This
  allows us to complete the construction of the internal direct-sum
  decomposition of~$M$ (Corollary~\ref{cor:internal-ds}).
  %
\end{itemize}
%

%
%

\section{Images and kernels}
\label{sec:cuts}

As in the one-dimensional setting~\cite{cb}, the basic
ingredients in our analysis are certain limits
of images and kernels. Given a rectangle~$R=(\lcut^+\cap\rcut^-)\times(\bcut^+\cap\tcut^-)$, and a point $t\in R$,
we first construct these limits along the 1-dimensional restrictions of the module~$M$ to the horizontal
and vertical lines passing through~t (with the convention that $\Ima_{\cut, t}^- = 0$ when $\cut^-=\emptyset$ and $\Ker_{\cut, t} = M_t$ when $\cut^+=\emptyset$):
\begin{equation}\label{eq:ima_ker_1d}
  \begin{gathered}
    \begin{array}{lcl}
    \Ima_{\lcut, t}^+(M) = \bigcap\limits_{\begin{smallmatrix}x \in \lcut^+\\x \leq t_x\end{smallmatrix}} \Ima h_{(x, t_y)}^t
  &\quad&
  \Ima_{\lcut, t}^-(M) = \sum\limits_{x \in \lcut^-} \Ima h_{(x, t_y)}^t\\[2em]
  \Ker_{\rcut, t}^+(M) = \bigcap\limits_{x \in \rcut^+} \Ker h_t^{(x, t_y)}
&&
  \Ker_{\rcut, t}^-(M) = \sum\limits_{\begin{smallmatrix}x \in \rcut^-\\x\geq t_x\end{smallmatrix}} \Ker h_t^{(x, t_y)}\\[2em]
  \Ima_{\bcut, t}^+(M) = \bigcap\limits_{\begin{smallmatrix}y \in \bcut^+\\y \leq t_y\end{smallmatrix}} \Ima h_{(t_x, y)}^t
  &\quad&
  \Ima_{\bcut, t}^-(M) = \sum\limits_{y \in \bcut^-} \Ima h_{(t_x, y)}^t\\[2em]
  \Ker_{\tcut, t}^+(M) = \bigcap\limits_{y \in \tcut^+} \Ker h_t^{(t_x, y)}
&&
  \Ker_{\tcut, t}^-(M) = \sum\limits_{\begin{smallmatrix}y \in \tcut^-\\y\geq t_y\end{smallmatrix}} \Ker h_t^{(t_x, y)}
 \end{array} \end{gathered}
\end{equation}
See Figure~\ref{fig:ImaKer} for an illustration. In the following we omit $M$ from our notations whenever the considered module is obvious.
\begin{figure}[htb]
\centering
\includegraphics[scale=0.8]{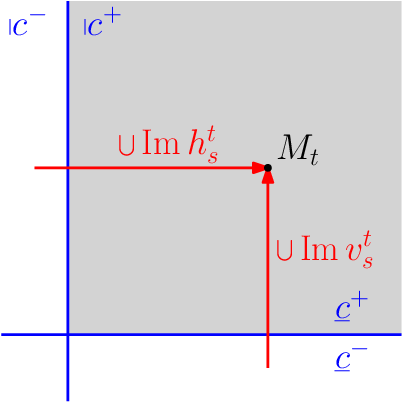}\hfill
\includegraphics[scale=0.8]{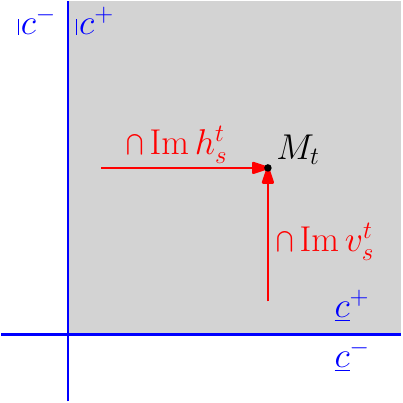}\\[5ex]
\includegraphics[scale=0.8]{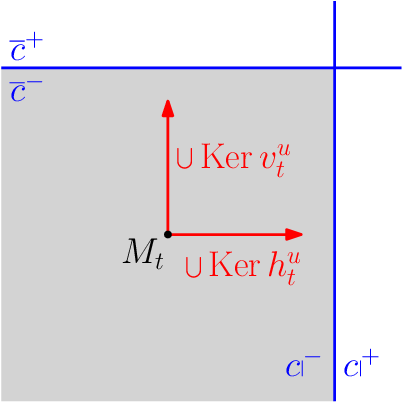}\hfill
\includegraphics[scale=0.8]{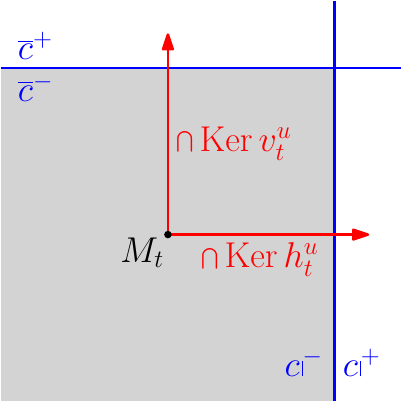}
\caption{From top to bottom and from left to right: the spaces $\Ima^-_{\cut,t}$, $\Ima^+_{\cut,t}$, $\Ker^-_{\cut,t}$ and $\Ker^+_{\cut,t}$.}
\label{fig:ImaKer}
\end{figure}
\begin{lem}[Realization]\label{lem:realization}
Assume $M$ is pfd, and extend it to a representation of the extended plane
$[-\infty, +\infty]^2$ by letting $M_{(\pm\infty,\cdot)} = M_{(\cdot,
  \pm\infty)} = 0$. Then:
\begin{align*}
\Ima^+_{\lcut, t} &= \Ima h_{(x,t_y)}^t \ \mbox{for some}\ x\in \lcut^+\cap (-\infty,t_x]\ \mbox{and any lower}\ x\in \lcut^+\\ 
\Ima^-_{\lcut, t} &= \Ima h_{(x,t_y)}^t \ \mbox{for some}\ x\in \lcut^-\cup\{-\infty\}\ \mbox{and any greater}\ x\in \lcut^-\\ 
\Ker^+_{\rcut, t} &= \Ker h_t^{(x,t_y)} \ \mbox{for some}\ x\in \rcut^+\cup\{+\infty\}\ \mbox{and any lower}\ x\in \rcut^+\\ 
\Ker^-_{\rcut, t} &= \Ker h_t^{(x,t_y)} \ \mbox{for some}\ x\in \rcut^-\cap [t, +\infty)\ \mbox{and any greater}\ x\in \rcut^- 
\end{align*}
And similarly for the vertical cuts $\bcut, \tcut$. Note that the
spaces $\Ima^\pm_{\cut,t}$ and $\Ker^\pm_{\cut,t}$ mentioned here, which are those of the extension of~$M$, are the same as those of~$M$ since $t\in\sR^2$
  and the cuts considered are cuts of $\sR$.
\end{lem}
\begin{proof}
This is Lemma~2.1 from~\cite{cb}, and a direct consequence of the finite dimensionality of~$M_t$.
\end{proof}
We now combine the contributions from the horizontal and vertical restrictions of~$M$ at point~$t$ as follows (where equalities between formulas come from the inclusions $\Ima_{\cut,t}^-\subseteq \Ima_{\cut,t}^+$ and $\Ker_{\cut,t}^-\subseteq \Ker_{\cut,t}^+$):
\begin{equation}\label{eq:fs_ima_ker}
\begin{gathered}
\begin{array}{rcl}
\Ima_{\Rs, t}^+ &=&  \Ima^+_{\lcut, t} \cap \Ima_{\bcut, t}^+,\\[0.5em]
\Ima_{\Rs, t}^- &=&  (\Ima^-_{\lcut, t} + \Ima_{\bcut, t}^-) \cap \Ima_{\Rs, t}^+\\[0.5em]
&=& \Ima^-_{\lcut, t} \cap \Ima^+_{\bcut,t} + \Ima_{\bcut, t}^-\cap \Ima^+_{\lcut,t}\\[0.5em]
\Ker_{\Rs, t}^+ &=&  (\Ker^+_{\rcut, t} + \Ker^-_{\tcut, t}) \cap (\Ker^-_{\rcut, t} + \Ker^+_{\tcut, t})\\[0.5em]
&=&  \Ker^+_{\rcut, t}\cap \Ker^+_{\tcut, t}   + \Ker^-_{\rcut, t} + \Ker^-_{\tcut, t}\\[0.5em]
\Ker_{\Rs, t}^- &=& \Ker^-_{\rcut, t} + \Ker^-_{\tcut, t}
\end{array}
\end{gathered}
\end{equation}
It is immediate from the definition that $\Ima^-_{\Rs,t}\subseteq \Ima^+_{\Rs,t}$ and $\Ker^-_{\Rs,t}\subseteq \Ker^+_{\Rs,t}$.

\begin{rem}\label{rem:IK}
  The above definitions are motivated as follows. The straightforward
  generalization of the definitions from the 1-d setting~\cite{cb}
  gives the following spaces:
%
  \begin{equation}\label{eq:I+-K+-}
  \begin{gathered}
  \begin{array}{lcl}
    I^+_{\Rs,t} = \bigcap\limits_{\begin{smallmatrix}s\in\Rs\\s\leq t\end{smallmatrix}} \Ima\rho_s^t   &\quad&
    I^-_{\Rs,t} = \sum\limits_{\begin{smallmatrix}s\notin\Rs\\s\leq t\end{smallmatrix}} \Ima\rho_s^t \\[2em]
    K^+_{\Rs,t} = \bigcap\limits_{\begin{smallmatrix}u\notin\Rs\\u\geq t\end{smallmatrix}} \Ker\rho_t^u   &\quad&
    K^-_{\Rs,t} = \sum\limits_{\begin{smallmatrix}u\in\Rs\\u\geq t\end{smallmatrix}} \Ker\rho_t^u
  \end{array}
  \end{gathered}
\end{equation}
Unfortunately, due to the order~$\leq$ not being total on $\sR^2$, we may not  always have $I^-_{\Rs,t} \subseteq I^+_{\Rs,t}$ nor $K^-_{\Rs,t}\subseteq K^+_{\Rs,t}$. The fix is to consider sums and intersections as follows:
\begin{equation}\label{eq:fs_ima_ker_bis}
  \begin{gathered}
    \begin{array}{rcl}
\Ima^+_{\Rs,t} = I^+_{\Rs,t}  &\quad&
\Ima^-_{\Rs,t} = I^-_{\Rs,t}\cap I^+_{\Rs,t}\\[1em]
\Ker^+_{\Rs,t} = K^+_{\Rs,t} + K^-_{\Rs,t} &\quad&
\Ker^-_{\Rs,t} = K^-_{\Rs,t}
    \end{array}
  \end{gathered}
\end{equation}
Other combinations of sums and intersections could be considered,
e.g. letting $\Ima^+_{\Rs,t} = K^+_{\Rs,t}+K^-_{\Rs,t}$ and
$\Ima^-_{\Rs,t} = I^-_{\Rs,t}$, however the ones above are the only ones
ensuring that the spaces can be transported from one index~$t$ to
another $t'\geq t$ (see the Transportation
Corollary~\ref{cor:transportation} below). Furthermore, they induce a
 duality between image and kernel spaces, through vector-space
duality, as will be emphasized and exploited in
Section~\ref{sec:completion} (see Lemma~\ref{lem:bot}). Finally,
assuming the module~$M$ is exact or satisfies the equalities
of~(\ref{eq:weak_exactness}), the horizontal and vertical
contributions to the images and kernels in~(\ref{eq:I+-K+-}) can be
decoupled so that the definitions in~(\ref{eq:fs_ima_ker_bis}) are
equivalent to those in~(\ref{eq:fs_ima_ker}) (see
Appendix~\ref{sec:proof-equiv}). In the following proof of
Theorem~\ref{thm_key} we only use the definitions
from~(\ref{eq:fs_ima_ker}), not the ones from~(\ref{eq:fs_ima_ker_bis}),
for consistency.
\end{rem}

\begin{exmp}\label{ex:IK}
Take for~$M$ the
direct sum of the modules associated with a birth quadrant~$A$ and a
death quadrant~$B$, such that the intersection~$A\cap B$ is
non-empty (see Figure~\ref{fig:BD} for an illustration).
    \begin{figure}[htb]
  \centering
  \includegraphics[scale=0.5]{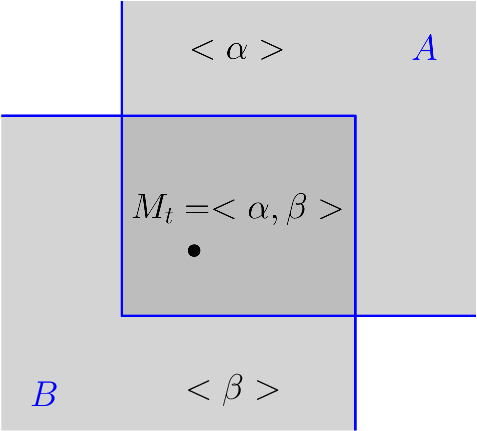}
  \caption{Overlapping birth and death quadrants.}
  \label{fig:BD} 
  \end{figure}
Given any $t\in A\cap B$, call $\alpha$ a
generator of the 1-dimensional subspace of~$M_t$ spanned by~$A$, and
$\beta$ a counterpart for~$B$. Then, applying the formulas
in~(\ref{eq:fs_ima_ker}) (or equivalently
the ones in~(\ref{eq:fs_ima_ker_bis})), we get:
\[
\begin{array}{llll}
  \Ima^+_{A,t} = M_t & \Ima^-_{A,t} = \langle\beta\rangle & \Ker^+_{A,t} = M_t & \Ker^-_{A,t} = \langle\beta\rangle \\[0.5em]
  \Ima^+_{B,t} = \langle\beta\rangle & \Ima^-_{B,t} = 0 & \Ker^+_{B,t} = \langle\beta\rangle & \Ker^-_{B,t} = 0
\end{array}
\]
Then, we see that:
\[
\begin{array}{ll}
  \Ima^+_{A,t}/\Ima^-_{A,t} \simeq \langle\alpha\rangle \simeq \Ker^+_{A,t}/\Ker^-_{A,t}\\[0.5em]
  \Ima^+_{B,t}/\Ima^-_{B,t} \simeq \langle\beta\rangle \simeq \Ker^+_{B,t}/\Ker^-_{B,t}
\end{array}
\]
Thus, for each block $A,B$ the quotient $\Ima^+/\Ima^-$ captures
those elements that are born on the bottom-left boundary of the block,
while $\Ker^+/\Ker^-$ captures those that die on its top-right
boundary. This fact holds generally for blocks, and (beyond that) also
for rectangles, as we shall see in Section~\ref{sec:counting
  functor}. Moreover, it is independent of the choice of index~$t$ within
the block or rectangle, as a consequence of the following {\em
  transportation} property.
\end{exmp}
 
It turns out that images get transported to images, and kernels to
kernels, through the internal morphisms of~$M$---see
Corollary~\ref{cor:transportation} below. The proof says something
slightly more precise, namely:
\begin{lem}\label{lem:trans}
Assume $M$ is pfd and satisfies~(\ref{eq:weak_exactness}). Let
$\Rs=(\lcut^+\cap\rcut^-)\times(\bcut^+\cap\tcut^-)$ be a rectangle,
$s\leq t\in \Rs$, and $\bullet, \blacktriangle \in \{+,-\}$.  Then:
\begin{align*}
  \rho_s^{t}(\Ima_{\lcut, s}^\bullet\cap \Ima_{\bcut,s}^\blacktriangle) &= \Ima_{\lcut, t}^\bullet\cap \Ima_{\bcut, t}^\blacktriangle\\
  (\rho_s^{t})^{-1}(\Ker_{\rcut, t}^\bullet + \Ker_{\tcut, t}^\blacktriangle) &= \Ker_{\rcut, s}^\bullet + \Ker_{\tcut, s}^\blacktriangle
\end{align*}
\end{lem}
\begin{proof}
  For convenience, we extend $M$ to a representation of the extended
  plane $[-\infty, +\infty]^2$ by letting $M_{(\pm\infty,\cdot)} =
  M_{(\cdot,\pm\infty)} = 0$. Note that this extension may not be
  exact when~$M$ itself is exact, however it still satisfies the
  equalities of~(\ref{eq:weak_exactness}) when $M$ does so.
  
  \medskip

  We first consider images.  The Realization
  Lemma~\ref{lem:realization} tells us that there exist $x\leq s_x\leq
  t_x$ and $y\leq s_y\leq t_y$ (possibly equal to $-\infty$) such that
  \begin{align*}
    \Ima_{\lcut, s}^\bullet = \Ima h_{(x,s_y)}^s \ \mbox{and}\ \Ima_{\lcut, t}^\bullet = \Ima h_{(x,t_y)}^t\\
    \Ima_{\bcut, s}^\blacktriangle = \Ima v_{(s_x,y)}^s \ \mbox{and}\ \Ima_{\bcut, t}^\blacktriangle = \Ima v_{(t_x,y)}^t
  \end{align*}
  We then have the following commutative diagram:
  \[ \xymatrix{
    (x, t_y) \ar[rr] && t\\
    (x, s_y) \ar[r] \ar[u] & s \ar[ur]\\
    (x,y) \ar[u] \ar[ur] \ar[r] & (s_x, y) \ar[u] \ar[r] & (t_x, y) \ar[uu]
  }\]
  Chasing through this diagram gives:
  \begin{align*}
    \Ima h_{(x, t_y)}^t \cap \Ima v_{(t_x, y)}^t &\stackrel{\mbox{\scriptsize (Eq. \ref{eq:weak_exactness})}}{=} \Ima \rho_{(x,y)}^t
    = \rho_s^t (\Ima \rho_{(x,y)}^s)\\
    &\stackrel{\mbox{\scriptsize (Eq. \ref{eq:weak_exactness})}}{=} \rho_s^t (\Ima h_{(x,s_y)}^s \cap \Ima v_{(s_x, y)}^s)
    \end{align*}
    thus proving the first part of the lemma.

    \medskip

  We now consider kernels.  The Realization
  Lemma~\ref{lem:realization} tells us that there exist $x\geq t_x\geq
  s_x$ and $y\geq t_y\geq s_y$ (possibly equal to $+\infty$) such that
  \begin{align*}
    \Ker_{\rcut, s}^\bullet = \Ker h^{(x,s_y)}_s \ \mbox{and}\ \Ker_{\rcut, t}^\bullet = \Ker h^{(x,t_y)}_t\\
    \Ker_{\tcut, s}^\blacktriangle = \Ker v^{(s_x,y)}_s \ \mbox{and}\ \Ker_{\tcut, t}^\blacktriangle = \Ker v^{(t_x,y)}_t
  \end{align*}
  We then have the following commutative diagram:
  \[ \xymatrix{
    (s_x, y) \ar[r] & (t_x, y) \ar[r] & (x,y)\\
    & t \ar[r] \ar[u] \ar[ur] & (x, t_y) \ar[u]\\
    s \ar[uu] \ar[ur] \ar[rr] & & (x, s_y) \ar[u]
  }\]
  Chasing through this diagram gives:
  \begin{align*}
  (\rho_s^{t})^{-1}(\Ker h^{(x, t_y)}_t + \Ker v^{(t_x, y)}_t)
  &\stackrel{\mbox{\scriptsize (Eq. \ref{eq:weak_exactness})}}{=}
  (\rho_s^{t})^{-1}(\Ker \rho^{(x, y)}_t)
  = \Ker \rho^{(x, y)}_s \\  
   & \stackrel{\mbox{\scriptsize (Eq. \ref{eq:weak_exactness})}}{=} \Ker h^{(x, s_y)}_s + \Ker v^{(s_x, y)}_s
  \end{align*}
    thus proving the second part of the lemma.
\end{proof}

\begin{cor}[Transportation]\label{cor:transportation}
Assume $M$ is pfd and satisfies~(\ref{eq:weak_exactness}). Let $\Rs$
be a rectangle and let $s\leq t\in \Rs$.  Then (using $\pm$ as a
shorthand for either $+$ or $-$, with the same sign on both sides of
the equality):
\begin{align*}
  \rho_s^{t}(\Ima_{\Rs, s}^\pm) &= \Ima_{\Rs, t}^\pm\\
  (\rho_s^{t})^{-1}(\Ker_{\Rs, t}^\pm) &= \Ker_{\Rs, s}^\pm
\end{align*}
\end{cor}
\begin{proof}
Follows from Lemma~\ref{lem:trans} and from the facts that $f(U+V) = f(U)+f(V)$ and $f^{-1}(U\cap V) = f^{-1}(U)\cap f^{-1}(V)$---recall the definitions in~(\ref{eq:fs_ima_ker}).
\end{proof}

Another property that we will be using is that, whenever the module~$M$ is exact and $\Fs$ is a block, kernels are included in images as follows:
\begin{lem}\label{b-ki}
Assume $M$ is pfd and exact. Then, for any fixed $t\in\sR^2$ and block $\Fs = (\lcut^+\cap\rcut^-)\times (\bcut^+\cap \tcut^-)$ containing~$t$:
\begin{itemize}
\item $\Ker_{\rcut,t}^- \subseteq \Ima_{\bcut,t}^+$ and
  $\Ker_{\tcut,t}^- \subseteq \Ima_{\lcut,t}^+$.
\item If $\rcut^+ \neq\emptyset$ (resp. $\tcut^+ \neq\emptyset$ ) then $\Ker_{\rcut,t}^+ \subseteq \Ima_{\bcut,t}^+$ (resp. $\Ker_{\tcut,t}^+ \subseteq \Ima_{\lcut,t}^+$).
\item If $\bcut^- \neq\emptyset$ (resp. $\lcut^- \neq\emptyset$ ) then $\Ker_{\rcut,t}^- \subseteq \Ima_{\bcut,t}^-$ (resp. $\Ker_{\tcut,t}^- \subseteq \Ima_{\lcut,t}^-$).
\item If both $\rcut^+ \neq\emptyset \neq \bcut^-$ (resp. $\tcut^+ \neq\emptyset \neq \lcut^-$) then $\Ker_{\rcut,t}^+ \subseteq \Ima_{\bcut,t}^-$ (resp. $\Ker_{\tcut,t}^+ \subseteq \Ima_{\lcut,t}^-$).
\end{itemize}
\end{lem}
\begin{proof}
All four cases are proven by the same argument, which we detail
here in the first case. The Realization Lemma~\ref{lem:realization}
tells us that there exist finite values $x\geq t_x$ and $y\leq t_y$
such that $\Ker_{\rcut, t}^- = \Ker h_t^{(x,t_y)}$ and $\Ima_{\bcut,
  t}^+ = \Ima v_{(t_x,y)}^t$. We then have the following exact
square:
  \[
\xymatrix{
  M_t \ar[r] & M_{(x,t_y)} \\
  M_{(t_x,y)} \ar[r]\ar[u] & M_{(x,y)} \ar[u]
}
\]
The exactness of this square implies that every $\alpha\in \Ker
h_t^{(x,t_y)}$ has a common antecedent 
$\beta\in M_{(t_x,y)}$ with $0\in M_{(x,y)}$, meaning that $\alpha=v_{(t_x,y)}^t(\beta)\in \Ima
v_{(t_x,y)}^t$. Therefore, $\Ker_{\rcut,t}^-\subseteq
\Ima_{\bcut,t}^+$.
The inclusion $\Ker_{\tcut,t}^-\subseteq
\Ima_{\lcut,t}^+$ is obtained symmetrically, with the Realization
 Lemma~\ref{lem:realization} giving some finite $x\leq t_x$ and $y\geq t_y$ such that $\Ker_{\tcut, t}^- = \Ker v_t^{(t_x,y)}$ and $\Ima_{\lcut, t}^+ = \Ima h_{(x,t_y)}^t$. 
\end{proof}
\section{The counting functor}
\label{sec:counting functor}


Our exposition in this section follows~\cite{cb} closely.
To define our functor we consider certain combinations of images and
kernels that, intuitively, capture the elements appearing at the
bottom and left boundaries of a rectangle and that die at its top and
right boundaries.
%
Specifically, given a rectangle $\Rs$ and a point $t\in\Rs$, we define as in~\cite{cb}:
\begin{equation}\label{eq:V+-}
\begin{array}{rl}
V_{\Rs, t}^+ & = \Ima_{\Rs, t}^+ \cap \Ker_{\Rs, t}^+,\\[0.5em]
V_{\Rs, t}^- &= \Ima_{\Rs, t}^+ \cap \Ker_{\Rs, t}^- + \Ima_{\Rs, t}^- \cap \Ker_{\Rs, t}^+
\end{array}
\end{equation}
%
Since $\Ima_{\Rs, t}^- \subseteq \Ima_{\Rs, t}^+$ and $\Ker_{\Rs, t}^- \subseteq \Ker_{\Rs, t}^+$, we have $V_{\Rs, t}^- \subseteq V_{\Rs, t}^+$.
%
Note that these spaces depend a priori on the location of $t$ in the rectangle. In fact, it turns out not to be the case, as the following result shows:

\begin{lem}\label{mit}
Assume $M$ is pfd and satisfies~(\ref{eq:weak_exactness}). Then, for all
$s\leq t\in \Rs$ we have $\rho_s^t(V^\pm_{\Rs, s}) = V^\pm_{\Rs,
  t}$. Furthermore, the induced map $\overline{\rho_s^t}: V_{\Rs, s}^+
/ V_{\Rs, s}^- \to V_{\Rs, t}^+ / V_{\Rs, t} ^-$ is an isomorphism.
\end{lem}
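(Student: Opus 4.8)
The plan is to reduce the two claims of Lemma~\ref{mit} to the Transportation lemma, which already supplies $\rho_s^t(\Ima^\pm_{\Fs,s}) = \Ima^\pm_{\Fs,t}$ and ${\rho_s^t}^{-1}(\Ker^\pm_{\Fs,t}) = \Ker^\pm_{\Fs,s}$. First I would establish $\rho_s^t(V^+_{\Fs,s}) = V^+_{\Fs,t}$. The inclusion $\subseteq$ is immediate: if $x \in \Ima^+_{\Fs,s} \cap \Ker^+_{\Fs,s}$, then $\rho_s^t(x) \in \rho_s^t(\Ima^+_{\Fs,s}) = \Ima^+_{\Fs,t}$ by Transportation, and $\rho_s^t(x) \in \Ker^+_{\Fs,t}$ because $x \in \Ker^+_{\Fs,s} = {\rho_s^t}^{-1}(\Ker^+_{\Fs,t})$. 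For $\supseteq$, take $y \in \Ima^+_{\Fs,t} \cap \Ker^+_{\Fs,t}$; by Transportation there is $x \in \Ima^+_{\Fs,s}$ with $\rho_s^t(x) = y$, and since $\rho_s^t(x) = y \in \Ker^+_{\Fs,t}$ we get $x \in {\rho_s^t}^{-1}(\Ker^+_{\Fs,t}) = \Ker^+_{\Fs,s}$, so $x \in V^+_{\Fs,s}$. The same argument applied to each of the two summands of $V^-_{\Fs,t}$ (using $\Ima^+\cap\Ker^-$ and $\Ima^-\cap\Ker^+$ in place of $\Ima^+\cap\Ker^+$) gives $\rho_s^t(V^-_{\Fs,s}) = V^-_{\Fs,t}$ by linearity.

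Next, the surjectivity of $\rho_s^t : V^+_{\Fs,s} \to V^+_{\Fs,t}$ together with $\rho_s^t(V^-_{\Fs,s}) = V^-_{\Fs,t}$ shows the induced map $\overline{\rho_s^t}$ is well-defined and surjective. It remains to prove injectivity, i.e. that if $x \in V^+_{\Fs,s}$ and $\rho_s^t(x) \in V^-_{\Fs,t}$, then $x \in V^-_{\Fs,s}$. This is the main obstacle. Write $\rho_s^t(x) = a + b$ with $a \in \Ima^+_{\Fs,t}\cap\Ker^-_{\Fs,t}$ and $b \in \Ima^-_{\Fs,t}\cap\Ker^+_{\Fs,t}$. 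Using surjectivity of $\rho_s^t$ onto $\Ima^+_{\Fs,t}\cap\Ker^-_{\Fs,t}$ and onto $\Ima^-_{\Fs,t}\cap\Ker^+_{\Fs,t}$ (both proved as in the previous paragraph), pick $a' \in \Ima^+_{\Fs,s}\cap\Ker^-_{\Fs,s}$ and $b' \in \Ima^-_{\Fs,s}\cap\Ker^+_{\Fs,s}$ with $\rho_s^t(a') = a$, $\rho_s^t(b') = b$. Then $x - a' - b' \in V^+_{\Fs,s} \subseteq \Ima^+_{\Fs,s}$ lies in $\Ker \rho_s^t$, so it suffices to show $\Ima^+_{\Fs,s} \cap \Ker\rho_s^t \subseteq V^-_{\Fs,s}$ — equivalently that every vector of $\Ima^+_{\Fs,s}$ killed by the single arrow $\rho_s^t$ already dies "within $\supp(\Fs)$", hence belongs to $\Ima^+_{\Fs,s}\cap\Ker^-_{\Fs,s}$.

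To close this gap I would argue, as in the Transportation lemma, shape type by shape type, reducing to a horizontal or vertical arrow by symmetry; take the vertical case $s_x = t_x$ with $\Fs$ a birth quadrant as the model. Here $\Ker^+_{\Fb,s} = \M_s$ so I must show $\Ima^+_{\Fb,s}\cap \Ker v_s^t \subseteq \Ima^+_{\Fb,s}\cap\Ker^-_{\Fb,s}$, where $\Ker^-_{\Fb,s} = \Ker^-_{\thcut,s} + \Ker^-_{\tvcut,s}$. Since $t$ stays in $\supp(\Fb)$, the vertical cut $\tvcut$ is the trivial cut at $+\infty$, so $v_s^t$ is one of the maps defining $\Ker^-_{\tvcut,s} = \bigcup_{y \ge s_y} \Ker v_s^{(s_x,y)}$; thus $\Ker v_s^t \subseteq \Ker^-_{\tvcut,s} \subseteq \Ker^-_{\Fb,s}$ directly, which even gives the inclusion without needing the $\Ima^+$ factor. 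The cases of death quadrants, horizontal bands and vertical bands are handled by the symmetric/analogous argument: when the arrow points "inward" relative to the shape's nontrivial cut one uses the realization lemma and a short diagram chase exactly mirroring the Transportation proof, and when it is the trivial direction the containment is immediate as above. Assembling these gives $x - a' - b' \in V^-_{\Fs,s}$, hence $x \in V^-_{\Fs,s}$, completing the proof that $\overline{\rho_s^t}$ is an isomorphism.
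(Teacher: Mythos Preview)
Your argument is correct and follows the paper's approach of reducing everything to the Transportation lemma, but your injectivity step takes an unnecessary detour. The paper lifts only \emph{one} of the two summands: writing $\rho_s^t(\alpha)=\beta_1+\beta_2$ with $\beta_1\in\Ima^-_{\Fs,t}\cap\Ker^+_{\Fs,t}$ and $\beta_2\in\Ima^+_{\Fs,t}\cap\Ker^-_{\Fs,t}$, it picks a preimage $\alpha_1\in\Ima^-_{\Fs,s}\cap\Ker^+_{\Fs,s}$ of $\beta_1$ only, so that $\rho_s^t(\alpha-\alpha_1)=\beta_2\in\Ker^-_{\Fs,t}$, and then Transportation gives $\alpha-\alpha_1\in{\rho_s^t}^{-1}(\Ker^-_{\Fs,t})=\Ker^-_{\Fs,s}$ directly; since $\alpha-\alpha_1\in\Ima^+_{\Fs,s}$ as well, one concludes $\alpha\in V^-_{\Fs,s}$. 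This bypasses your final paragraph entirely. Even within your arrangement (lifting both $a$ and $b$), the shape-by-shape analysis you sketch is not needed: the inclusion $\Ker\rho_s^t\subseteq\Ker^-_{\Fs,s}$ is an immediate consequence of Transportation, since $\Ker\rho_s^t={\rho_s^t}^{-1}(0)\subseteq{\rho_s^t}^{-1}(\Ker^-_{\Fs,t})=\Ker^-_{\Fs,s}$. So your proof works, but the paper's single-lift version is cleaner and avoids the vague ``diagram chase mirroring Transportation'' you invoke for the non-birth-quadrant cases.
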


\begin{proof}
This follows from the Transportation Corollary~\ref{cor:transportation}. First, we have:
\begin{align*}
\rho_s^t(V^+_{\Rs, s}) & = \rho_s^t(\Ima^+_{\Rs, s}\cap \Ker^+_{\Rs,s})\\&\subseteq \rho_s^t(\Ima^+_{\Rs, s})\cap \rho_s^t(\Ker^+_{\Rs,s}) = \Ima^+_{\Rs, t}\cap \Ker^+_{\Rs,t} = V^+_{\Rs, t} \\[0.5em]
\rho_s^t(V^-_{\Rs, s}) & = \rho_s^t(\Ima^+_{\Rs, s}\cap \Ker^-_{\Rs,s} + \Ima^-_{\Rs, s}\cap \Ker^+_{\Rs,s}) \\[0.5em]
& \subseteq \rho_s^t(\Ima^+_{\Rs, s})\cap \rho_s^t(\Ker^-_{\Rs,s}) + \rho_s^t(\Ima^-_{\Rs, s})\cap \rho_s^t(\Ker^+_{\Rs,s}) \\[0.5em]
& = \Ima^+_{\Rs, t} \cap \Ker^-_{\Rs,t} + \Ima^-_{\Rs, t} \cap \Ker^+_{\Rs,t} = V^-_{\Rs, t}
\end{align*}
Thus, $\rho_s^t(V^\pm_{\Rs, s}) \subseteq V^\pm_{\Rs, t}$ and the
induced map $\overline{\rho_s^t}$ is well-defined. We will now show
that $\overline{\rho_s^t}$ is both injective and surjective, proving
that $\rho_s^t(V^\pm_{\Rs,s}) = V^\pm_{\Rs,t}$ along the way.

Surjectivity: Take $\beta \in V^+_{\Rs, t} = \Ima_{\Rs, t}^+ \cap
\Ker_{\Rs, t}^+$. Then, $\beta = \rho_s^t(\alpha)$ for some $\alpha \in
\Ima_{\Rs, s}^+$.  Now, $\alpha \in ({\rho_s^t})^{-1}(\beta)
\subseteq ({\rho_s^t})^{-1}(\Ker_{\Rs, t}^+) = \Ker_{\Rs, s}^+$, so
$\alpha\in V^+_{\Rs, s}$. Thus, $\rho_s^t(V^+_{\Rs,
  s}) = V^+_{\Rs, t}$,
which implies that the induced map
$\overline{\rho_s^t}$ is surjective.

Injectivity: Take $\alpha\in V^+_{\Rs, s}$ such that
$\beta=\rho_s^t(\alpha)\in V^-_{\Rs, t}$. Then,
$\beta=\beta_1+\beta_2$ with $\beta_1 \in \Ima_{\Rs, t}^- \cap
\Ker_{\Rs, t}^+$ and $\beta_2 \in \Ima_{\Rs, t}^+ \cap \Ker_{\Rs,
  t}^-$.  By the same argument as before, $\beta_1 =
\rho_s^t(\alpha_1)$ for some $\alpha_1 \in \Ima_{\Rs, s}^- \cap
\Ker_{\Rs, s}^+$. Now, $\rho_s^t(\alpha - \alpha_1) = \beta_2 \in
\Ker_{\Rs, t}^-$ so $\alpha - \alpha_1 \in \Ker_{\Rs, s}^-$.
Moreover, $\alpha-\alpha_1 \in \Ima_{\Rs, s}^+$, so $\alpha \in
V_{\Rs, s}^-$. Thus, $(\rho_s^t|_{V^+_{\Rs,s}})^{-1}(V^-_{\Rs,t})
\subseteq V^-_{\Rs,s}$, which implies that the induced map
$\overline{\rho_s^t}$ is injective. It also implies that
$\rho_s^t(V^-_{\Rs, s}) = V^-_{\Rs, t}$ since we already know that
$\rho_s^t(V^-_{\Rs,s}) \subseteq V^-_{\Rs, t} \subseteq V^+_{\Rs, t} =
\rho_s^t(V^+_{\Rs, s})$.
%
\end{proof}

We can now define an intrinsic quotient, independent of the location
of $t\in \Rs$, by considering the inverse system\footnote{Strictly
  speaking, we use the opposite ordering on~$\Rs$, to fit with the
  convention of~\cite[Chap.~0,
    \textsection13.1]{Grothendieck1961}. This switch in the ordering
  is implicit in the rest of the paper.} of vector spaces $V_{\Rs,
  t}^+ / V_{\Rs, t}^-$ with transition maps $\overline{\rho_s^t}$, and
by taking its inverse limit:
\begin{equation}\label{eq:CR}
C_\Rs(M) = \varprojlim\limits_{t \in \Rs} V_{\Rs, t}^+ / V_{\Rs, t}^-
\end{equation}
By Lemma~\ref{mit}, this limit is isomorphic to
$V_{\Rs,t}^+/V_{\Rs,t}^-$ for all $t\in\Rs$. Moreover, its
construction is entirely functorial, since for any morphism of modules
$\phi:M\to N$ there are canonically induced maps
$\Ima^\pm_{\Rs,t}(M) \to \Ima^\pm_{\Rs,t}(N)$ and $\Ker^\pm_{\Rs,t}(M)
\to \Ker^\pm_{\Rs,t}(N)$, then $V^\pm_{\Rs,t}(M) \to
V^\pm_{\Rs,t}(N)$, then $V^+_{\Rs,t}/V^-_{\Rs,t}(M) \to
V^+_{\Rs,t}/V^-_{\Rs,t}(N)$, and finally $C_\Rs(\phi):C_{\Rs}(M)\to
C_{\Rs}(N)$ by universality of the limit. Thus, $C_\Rs$ is a
functor from the category of pfd bimodules satisfying the equalities
of~(\ref{eq:weak_exactness}) to the category of finite-dimensional
vector spaces. This functor is additive because the inverse limit
commutes whith direct products, and direct products coincide with
direct sums in the category of pfd bimodules.

We refer to $C_\Rs$ as the {\em counting functor} associated to the
rectangle~$R$ because, as we shall see in the following sections, what
it does is, literally, to count the multiplicity of the
summand~$\field_\Rs$ in the direct-sum decomposition of~$M$. In
particular, we can already prove the following fact:
\begin{lem}\label{lem:C_f_mult}
Assume $M$ is pfd and decomposes as a direct sum of rectangle
modules.  Then, for any rectangle~$\Rs$, the multiplicity of the
summand $\field_{\Rs}$ in the direct-sum decomposition of~$M$ is given by~$\dim
C_{\Rs}(M)$.
\end{lem}
\begin{proof}
Since $C_{\Rs}$ is an additive functor, it is enough to prove the result on a single summand $\field_{\Rs'}$. Let us write $\Rs = (\lcut^+ \cap \rcut^-) \times (\bcut^+ \cap \tcut^-)$ and $\Rs' = (\lcut'^+ \cap \rcut'^-) \times (\bcut'^+ \cap \tcut'^-)$.

Suppose first that $\Rs' \neq \Rs$. Then, there is a cut that differs
between $\Rs$ and $\Rs'$, i.e. there is some $\cut\in\{\lcut, \bcut,
\rcut, \tcut\}$ such that $\cut\neq \cut'$. For all $t\in\Rs\cap\Rs'$,
we then have $\bullet^+_{\cut,t}(\field_{\Rs'}) =
\bullet^-_{\cut,t}(\field_{\Rs'})$, where $\bullet$ stands for either
$\Ima$ or $\Ker$ depending on whether $c\in\{\lcut, \bcut\}$ or
$c\in\{\rcut, \tcut\}$. Then, by~(\ref{eq:fs_ima_ker}) we have
$\bullet^+_{\Rs,t}(\field_{\Rs'})=\bullet^-_{\Rs,t}(\field_{\Rs'})$,
which by~(\ref{eq:V+-}) implies that
$V^+_{\Rs,t}(\field_{\Rs'})=V^-_{\Rs,t}(\field_{\Rs'})$ and so
$V^+_{\Rs,t}(\field_{\Rs'})/V^-_{\Rs,t}(\field_{\Rs'})=0$. Meanwhile,
for all $t\in \Rs\setminus\Rs'$, we have $(\field_{\Rs'})_t=0$ and so
$V^+_{\Rs,t}(\field_{\Rs'})/V^-_{\Rs,t}(\field_{\Rs'})=0$. Taking the inverse limit as in~(\ref{eq:CR}), we obtain that
$C_\Rs(\field_{\Rs'})=0$.

Suppose now that $\Rs'=\Rs$. For any $t \in \Rs$ and any $\cut\in\{\lcut,
\bcut, \rcut, \tcut\}$, we have $\bullet^+_{\cut,t}(\field_{\Rs}) =
(\field_{\Rs})_t \simeq \field$ and $\bullet^-_{\cut,t}(\field_{\Rs}) =
0$, where $\bullet$ stands for either $\Ima$ or $\Ker$ depending on
whether $c\in\{\lcut, \bcut\}$ or $c\in\{\rcut, \tcut\}$. Then,
by~(\ref{eq:fs_ima_ker}) we have $\Ima^+_{\Rs,t}(\field_{\Rs}) =
\Ker^+_{\Rs,t}(\field_{\Rs}) = (\field_{\Rs})_t\simeq\field$ while
$\Ima^-_{\Rs,t}(\field_{\Rs}) = \Ker^-_{\Rs,t}(\field_{\Rs}) = 0$,
which by~(\ref{eq:V+-}) implies that
$V^+_{\Rs,t}(\field_{\Rs})=(\field_{\Rs})_t\simeq\field$ while
$V^-_{\Rs,t}(\field_{\Rs})=0$, and so
$V^+_{\Rs,t}(\field_{\Rs})/V^{-}_{\Rs,t}(\field_{\Rs})\simeq\field$,
which, taking the inverse limit as in~(\ref{eq:CR}), gives
$C_\Rs(\field_{\Rs})\simeq\field$ and so $\dim C_\Rs(\field_{\Rs}) =
1$ as desired.
\end{proof}

\section{Submodules}
\label{sec:submodules}

While the counting functor allows us to retrieve the
multiplicity of a summand given a decomposition of~$M$, it is not
enough to prove the existence of such a decomposition.  For this we
need to further assume that $M$ is exact, and to specify a
submodule~$\MRs_\Fs$ of~$M$ for each block $\Fs$, so that in the
following sections we can exhibit an internal direct-sum
decomposition.  To define $\MRs_\Fs$ we use the spaces $V^\pm_{\Fs,t}$
from~(\ref{eq:V+-}) and consider their inverse limits:
\[
V_{\Fs}^{\pm}(M) = \varprojlim\limits_{t \in \Fs} V_{\Fs, t}^{\pm}
\]
%
%
%
\begin{lem}\label{lem:isom_quotients}
  Assume $M$ is pfd and satisfies~(\ref{eq:weak_exactness}). Then, the
  quotient of limits $V^+_\Fs(M)/V^-_\Fs(M)$ is isomorphic to the
  limit of quotients $C_\Fs(M)$.
\end{lem}
\begin{proof}
Recall from Lemma~\ref{mit} that $\rho_s^t(V^-_{\Fs,s}) \subseteq
V^-_{\Fs, t}$ for any $s\leq t\in\Fs$.  Thus, we have an inverse system of
vector spaces $V_{\Fs, t}^-$ for $t \in \Fs$, with transition maps
$\rho_s^t$ for $s\leq t\in\Fs$. This system satisfies the
Mittag-Leffler condition~\cite[Chap.~0, (13.1.2)]{Grothendieck1961}
because every space $V^-_{\Fs,t}$
is finite-dimensional.  Now, since $\Fs$ contains a
countable subset\footnote{Take either
  $\{\min \Fs\}$, or $(\{\min\{t_x \mid t \in\Fs\}\}\times\sQ)\cap \Fs$, or
  $(\sQ\times \{\min\{t_y \mid t \in\Fs\}\})\cap\Fs$, or $\sQ^2 \cap \Fs$,
  depending on whether (respectively) $\Fs$ contains both its left and bottom
  boundaries, or only the left one, or only the bottom one, or none.} that is coinitial for the product order~$\leq$, the
hypotheses of Proposition~13.2.2 of~\cite[Chap.~0]{Grothendieck1961}
hold for the system of exact sequences
\[
0 \rightarrow V_{\Fs, t}^- \rightarrow V_{\Fs, t}^+ \rightarrow V_{\Fs, t}^+/ V_{\Fs, t}^- \rightarrow 0
\]
and hence the limit sequence
%
\[
0 \rightarrow V_{\Fs}^-(M) \rightarrow V_{\Fs}^+(M) \rightarrow
C_\Fs(M)
\rightarrow 0
\]
%
 is also exact, giving the result.
\end{proof}
Letting $\pi_t : V_{\Fs}^+(M) \rightarrow V_{\Fs, t}^+$ denote the
natural map given by the limit, we can make the following
identification:
\[
V_{\Fs}^-(M) = \bigcap\limits_{t \in \Fs} \pi_t^{-1}(V_{\Fs, t}^-) \subseteq V_{\Fs}^+(M)
\]
\begin{lem}\label{rpi}
  Assume $M$ is pfd and satisfies~(\ref{eq:weak_exactness}). Then, for
  all $t \in \Fs$, the induced map $\overline{\pi_t} : V_{\Fs}^+(M) /
  V_{\Fs}^-(M) \rightarrow V_{\Fs,t}^+ / V_{\Fs,t}^-$ is an
  isomorphism.
\end{lem}
\begin{proof}
Following-up the proof of Lemma~\ref{lem:isom_quotients}, for every
$t\in \Fs$ we have the commutative diagram below, where the
vertical arrows are the natural maps given by the limit, and where each
row is known to be exact:
%
\[
\xymatrix{ 0 \ar[r]\ar[d] & V_{\Fs}^-(M)
  \ar[r]\ar^-{\pi_t|_{V_\Fs^-(M)}}[d] & V_\Fs^+(M)
  \ar[r]\ar^-{\pi_t}[d] & C_\Fs(M) \ar[r]\ar^-{\nu_t}[d] & 0
  \ar[d] \\ 0 \ar[r] & V_{\Fs,t}^- \ar[r] & V_{\Fs,t}^+ \ar[r] &
  V_{\Fs,t}^+/V_{\Fs,t}^- \ar[r] & 0 }
\]
%
Hence the (commutative) diagram:
  \[\xymatrix{
    V^+_\Fs(M)/V^-_\Fs(M) \ar^-{\simeq}[r]\ar_-{\overline{\pi_t}}[d] & C_\Fs(M)\ar^-{\nu_t}[d] \\
    V^+_{\Fs,t}/V^-_{\Fs,t} \ar@{=}[r] & V^+_{\Fs,t}/V^-_{\Fs,t}
  }\]
  Now, by Lemma~\ref{mit} the maps $\overline{\rho_s^u}$ are
  isomorphisms for all $s\leq u\in\Fs$, therefore $\nu_t$ itself is an
  isomorphism. The conclusion follows.
\end{proof}
%
%
Now we can identify, for each block~$\Fs$, a submodule $\MRs_{\Fs}$ of
$M$ that is isomorphic to a direct sum of $\dim C_\Fs(M)$ copies of
the interval module~$\field_\Fs$ (see Lemma~\ref{lem:block-copies}
below). Note that, unlike in the 1-d setting~\cite{cb}, not every
vector space complement of~$V_{\Fs}^-(M)$ in $V_{\Fs}^+(M)$ will work
to get a submodule whose support is precisely~$\Fs$. We need to choose
that complement with care so that the submodule does vanish
outside~$\Fs$, and for this we use exactness. Here are the details:
\begin{prop}\label{prop:MRs}
  Assume $M$ is pfd and exact. Then, for each block~$\Fs$ there is a vector
  space complement~$\MRs_{\Fs}^0$ of $V_{\Fs}^-(M)$ in $V_{\Fs}^+(M)$
  such that the family of subspaces defined by
\[
(\MRs_{\Fs})_t = 
\begin{cases}
\pi_t(\MRs_{\Fs}^0)& (t \in \Fs)\\
0& (t \notin \Fs)
\end{cases}
\]
defines a submodule $\MRs_{\Fs}$ of $M$.
\end{prop}
\begin{proof}
Given a fixed  block~$\Fs$, let us first observe that, whatever choice of subspace $\MRs_\Fs^0$ we make such that $V_{\Fs}^+(M) = \MRs_{\Fs}^0 \oplus
V_{\Fs}^-(M)$, the following properties will be satisfied:
\begin{itemize}
\item For any $s \leq t$ sitting both in~$\Fs$, $\rho_s^t((\MRs_{\Fs})_s)\subseteq (\MRs_\Fs)_t$. This is because  $\rho_s^t \circ \pi_s =
  \pi_t$ by definition of $\pi$.
  \item For any $s\leq t$ such that $s\notin\Fs$ and $t\in\Fs$,  $\rho_s^t((\MRs_\Fs)_s) = \rho_s^t(0) = 0 \subseteq
    (\MRs_\Fs)_t$.
\end{itemize}
There only remains to show that, for a suitable choice of subspace~$\MRs_\Fs^0$, we also have $\rho_s^t(\pi_s(\MRs_\Fs^0)) = 0$ for all $s\leq t$ with $s\in\Fs$ and $t\notin\Fs$.
For this we let $\Fs=(\lcut^+\cap \rcut^-)\times(\bcut^+\cap\tcut^-)$ and we distinguish between the various block types:

\medskip

\paragraph*{\bf Case $\Fs$ is a birth quadrant (possibly with $\lcut^-=\emptyset$ or $\bcut^-=\emptyset$).}

Then any choice of vector space complement~$\MRs_\Fs^0$ works
trivially, because there are no indices $s\leq t$ with $s\in\Fs$ and
$t\notin\Fs$.

\medskip

\paragraph*{\bf Case $\Fs$ is a death quadrant and not a band ($\rcut^+\neq\emptyset\neq\tcut^+$).}

Then we will enforce $\pi_s(\MRs_\Fs^0)\subseteq
\Ker^+_{\rcut,s}\cap\Ker^+_{\tcut,s}$ for every $s\in\Fs$, which will
imply that $\rho_s^t(\pi_s(\MRs_\Fs^0)) \subseteq
\rho_s^t(\Ker^+_{\rcut,s}\cap \Ker^+_{\tcut,s}) = 0$ for every $t\geq
s$ with $t\notin\Fs$.  Let then $K^+_{\Fs,s} = \Ker^+_{\rcut,s}\cap
\Ker^+_{\tcut,s}$ for each $s\in\Fs$, and consider the inverse system
formed by these vector spaces with the transition maps $\rho_s^u$ for
$s\leq u\in\Fs$. Since $K^+_{\Fs,s}\subseteq \Ima^+_{\Fs,s}$ by
Lemma~\ref{b-ki}, we have $K^+_{\Fs,s}\subseteq V^+_{\Fs,s}$ and so
the inverse limit $K^+_\Fs(M)$ of the system can be identified as
follows:
\[
K^+_\Fs(M) = \varprojlim\limits_{s \in \Fs} K^+_{\Fs, s}
= \bigcap\limits_{s \in \Fs} \pi_s^{-1}(K^+_{\Fs, s}) \subseteq V_{\Fs}^+(M)
\]
We claim that $V^-_\Fs(M) + K^+_\Fs(M)  = V^+_\Fs(M)$. Indeed, this equality holds at every index $s\in\Fs$ because $K^+_{\Fs,s}\subseteq \Ima^+_{\Fs,s}$:
\[
V^+_{\Fs,s} = \Ima^+_{\Fs,s} \cap (\Ker^-_{\Fs,s} + K^+_{\Fs,s}) = \Ima^+_{\Fs,s}\cap \Ker^-_{\Fs,s} + K^+_{\Fs,s} = V^-_{\Fs,s} + K^+_{\Fs,s}.
\]
In other words, at every index $s\in\Fs$ we have the following exact sequence:
\[
\xymatrix@C=25pt{
  0 \ar[r] &  V^-_{\Fs,s} \cap K^+_{\Fs,s} \ar^-{\alpha\mapsto(\alpha,-\alpha)}[rr] &&
  V^-_{\Fs,s} \oplus K^+_{\Fs,s} \ar^-{(\alpha,\beta)\mapsto\alpha+\beta}[rr] &&
  V^+_{\Fs,s} \ar[r] & 0
}
\]
Since every space $V^-_{\Fs,s} \cap K^+_{\Fs,s}$
is finite-dimensional, the Mittag-Leffler condition is satisfied by
this system of exact sequences, and so, by Proposition~13.2.2
of~\cite[Chap.~0]{Grothendieck1961}, the limit sequence is
exact. After noticing that $\varprojlim\limits V^-_{\Fs,s}\cap K^+_{\Fs,
  s} = V^-_{\Fs}(M)\cap K^+_{\Fs}(M)$ inside~$V^+_{\Fs}(M)$, and that
the canonical morphism  $V^-_{\Fs}(M)\oplus K^+_{\Fs}(M)\to \varprojlim\limits V^-_{\Fs,s}\oplus K^+_{\Fs, s}$ is an isomorphism, we obtain the following exact
sequence:
\[
\xymatrix@C=20pt{
  0 \ar[r] & V^-_{\Fs}(M)\cap K^+_{\Fs}(M) \ar^-{\alpha\mapsto(\alpha,-\alpha)}[rr] &&
  V^-_{\Fs}(M) \oplus K^+_{\Fs}(M) \ar^-{(\alpha,\beta)\mapsto\alpha+\beta}[rr] &&
  V^+_{\Fs}(M) \ar[r] & 0
}
\]
which implies that $V^-_\Fs(M) + K^+_\Fs(M) = V^+_\Fs(M)$, as claimed
earlier. We can then choose\footnote{This is done via a finite
  induction since $V^+_\Fs(M)/V^-_\Fs(M)$ is finite-dimensional by
  Lemma~\ref{rpi}.}  our vector space complement $\MRs_\Fs^0(M)$
inside $K^+_\Fs(M)$, which ensures that $\pi_s(\MRs_\Fs^0)\subseteq
K^+_{\Fs,s}$ for every $s\in\Fs$.

\medskip

\paragraph*{\bf Case $\Fs$ is a horizontal band and not a birth quadrant ($\tcut^+\neq\emptyset$).}

Then we will enforce $\pi_s(\MRs_\Fs^0)\subseteq \Ima^+_{\bcut,s}\cap\Ker^+_{\tcut,s}$ for every $s\in\Fs$, which will imply that $\rho_s^t(\pi_s(\MRs_\Fs^0)) \subseteq \rho_s^t(\Ker^+_{\tcut,s}) = 0$ for every $t\geq s$ with $t\notin\Fs$.  Let then $K^+_{\Fs,s} = \Ima^+_{\bcut,s}\cap \Ker^+_{\tcut,s}$ for each $s\in\Fs$. We have:
\[
V^+_{\Fs,s} = \Ima^+_{\Fs,s}\cap \Ker^+_{\Fs,s}= \Ima^+_{\lcut,s}\cap\Ima^+_{\bcut,s}\cap (\Ker^-_{\rcut,s} + \Ker^+_{\tcut,s}).
\]
Since $\Ker^-_{\rcut,s}\subseteq \Ima^+_{\bcut,s}$ and $\Ker^-_{\tcut,s}\subseteq \Ker^+_{\tcut,s}\subseteq \Ima^+_{\lcut,s}$ by Lemma~\ref{b-ki}, we get:
\[
V^+_{\Fs,s} = \Ima^+_{\lcut,s}\cap \Ker^-_{\rcut,s} +  \Ima^+_{\bcut,s}\cap \Ker^+_{\tcut,s} = \Ima^+_{\lcut,s}\cap \Ker^-_{\rcut,s} + K^+_{\Fs,s}.
\]
Meanwhile, we have:
\begin{align*}
  V^-_{\Fs,s} &= \Ima^+_{\Fs,s}\cap \Ker^-_{\Fs,s} + \Ima^-_{\Fs,s}\cap \Ker^+_{\Fs,s}\\
 & = \Ima^+_{\lcut,s}\cap\Ima^+_{\bcut,s}\cap (\Ker^-_{\rcut,s} + \Ker^-_{\tcut,s}) + 
\Ima^-_{\Fs,s}\cap \Ker^+_{\Fs,s}\\  
  & = \Ima^+_{\lcut,s}\cap \Ker^-_{\rcut,s} + \Ima^+_{\bcut,s}\cap\Ker^-_{\tcut,s} + 
\Ima^-_{\Fs,s}\cap \Ker^+_{\Fs,s} \supseteq \Ima^+_{\lcut,s}\cap \Ker^-_{\rcut,s}.
\end{align*}
Hence, $V^-_{\Fs,s} + K^+_{\Fs,s} = V^+_{\Fs,s}$. By the same argument as in the previous case, we deduce that the limits satisfy  $V^-_\Fs(M) + K^+_\Fs(M)  = V^+_\Fs(M)$. We can then choose our vector space complement $\MRs_\Fs^0$ inside $K^+_\Fs(M)$, which ensures that $\pi_s(\MRs_\Fs^0)\subseteq K^+_{\Fs,s}$ for every $s\in\Fs$.

\medskip

\paragraph*{\bf Case $\Fs$ is a vertical band and not a birth quadrant ($\rcut^+\neq\emptyset$).}

This case is symmetric to the previous one.
\end{proof}
\begin{rem}\label{rem_rpi}
Since we have chosen $\MRs_{\Fs}^0$ such that  $V_{\Fs}^+(M) = V_{\Fs}^-(M) \oplus \MRs_{\Fs}^0$, for every $t\in\Fs$ we have $\MRs_{\Fs}^0 \stackrel{\pi_t}{\simeq}  (\MRs_{\Fs})_t$ and $V_{\Fs, t}^+ = V_{\Fs,t}^- \oplus (\MRs_{\Fs})_t$ by Lemma~\ref{rpi}.
\end{rem}
Assuming that $M$ is pfd and decomposes as a direct sum of block
modules (hence exact), we have by construction and
Lemma~\ref{lem:C_f_mult} that $\dim \MRs_{\Fs}^0 = \dim C_\Fs(M)$ is
equal to the multiplicity of the summand $\field_\Fs$ in the
decomposition of~$M$. More generally:
\begin{lem}\label{lem:block-copies}
Assume that $M$ is pfd and exact. Then, for every block~$\Fs$,
$\MRs_{\Fs}$ is isomorphic to the direct sum of $\dim C_\Fs(M)$ copies
of the block module~$\field_{\Fs}$.
\end{lem}
\begin{proof}
For every $t\in\Fs$, the restriction of $\pi_t$ to
$\MRs_{\Fs}^0$ is an isomorphism onto~$(\MRs_\Fs)_t$ by Lemma~\ref{rpi}.
Take a (finite) basis $\Gamma$ of $\MRs_{\Fs}^0$. For each $\gamma\in \Gamma$,
the elements $\pi_t(\gamma)$ for $t\in\Fs$ are non-zero and they
satisfy $\rho_s^t(\pi_s(\gamma)) = \pi_t(\gamma)$ for all $s\leq t\in\Fs$,
so they span a submodule $N(\gamma)$ of $\MRs_{\Fs}$ that is
isomorphic to $\field_{\Fs}$. Now, for all $t\in\Fs$ the family
$\{\pi_t(\gamma)\}_{\gamma\in \Gamma}$ is a basis of $(\MRs_{\Fs})_t$,
so $\MRs_{\Fs} = \bigoplus_{\gamma\in \Gamma} N(\gamma)$. Finally, the
size of the basis $\Gamma$ is $\dim \MRs_\Fs^0 = \dim C_\Fs(M)$.
\end{proof}

%

\section{Sections and direct sum}
\label{sec:direct_sum}

In this section we assume that $M$ is pfd and exact, and we show that
the submodules $\MRs_{\Fs}$ introduced in Proposition~\ref{prop:MRs}
are in direct sum.
For this we introduce the so-called
{\em sections} associated to these submodules.  A {\em section}
in a vector space~$U$ is a pair $(F^-, F^+)$ of subspaces such that $F^-
\subseteq F^+ \subseteq U$.  We say that a family of sections
$\{(F_\lambda^-, F_\lambda^+)\}_{\lambda \in \Lambda}$ in~$U$ is {\em
  disjoint} if for all $\lambda\neq\mu$, either $F^+_\lambda\subseteq
F^-_\mu$ or $F^+_\mu\subseteq F^-_\lambda$.
Disjointness is a useful concept for proving direct sums thanks to the
following result:
\begin{lem}[\cite{cb}]\label{disjoint-decomp}
Suppose that $\{(F_\lambda^-, F_\lambda^+)\}_{\lambda \in \Lambda}$ is
a disjoint family of sections in~$U$. For each $\lambda\in\Lambda$,
let $\MRs_\lambda$ be a subspace with $F^+_\lambda = \MRs_\lambda \oplus
F^-_\lambda$. Then, the family of spaces
$\{\MRs_\lambda\}_{\lambda\in\Lambda}$ is in direct sum.
\end{lem}
In our context, it turns out that the individual families of image and
kernel subspaces induced by horizontal or vertical cuts are disjoint:
\begin{lem}[\cite{cb}]\label{lem:disjoint_1d}
Given a fixed $t\in\sR^2$, each of the families $\{(\Ima_{\lcut,t}^-,
\Ima_{\lcut,t}^+)\}_{\lcut^+\ni t_x}$, $\{(\Ker_{\rcut,t}^-,
\Ker_{\rcut,t}^+)\}_{\rcut^-\ni t_x}$, $\{(\Ima_{\bcut,t}^-,
\Ima_{\bcut,t}^+)\}_{\bcut^+\ni t_y}$, and $\{(\Ker_{\tcut,t}^-,
\Ker_{\tcut,t}^+)\}_{\tcut^-\ni t_y}$ is disjoint in~$\M_t$.
\end{lem}
Moreover, disjoint families can be combined in certain ways to get new disjoint families, for instance:
\begin{lem}[\cite{cb}]\label{disjoint-pres}
If $\cF=\{(F_\lambda^-, F_\lambda^+)\}_{\lambda \in \Lambda}$ and $\cG=\{G_\sigma^-, G_\sigma^+)\}_{\sigma \in \Sigma}$ are two families of
sections in~$U$ such that $\cF$ is disjoint, then the family
\[
\left\{(F_\lambda^- + G_\sigma^-\cap F_\lambda^+,\,
F_\lambda^- + G_\sigma^+\cap F_\lambda^+)\right\}_{(\lambda, \sigma) \in \Lambda \times \Sigma}
\]
is disjoint. 
\end{lem}
However, unlike in the 1-d setting~\cite{cb}, these results do not
suffice to conclude on the direct sum of the submodules $\MRs_\Fs$
 in our 2-d setting, because the full family of sections  $\cV_t = \{(V^-_{\Fs,t},\, V^+_{\Fs,t})\}_{\Fs:\mathrm{block}\ni t}$ is not
 disjoint.
%
\begin{exmp}\label{ex:disjoint}
  Take for $M$ the direct sum of the modules associated with two birth
  quadrants $\Fb_1$ and $\Fb_2$ whose lower-left corners are not
  comparable in the product order on $\sR^2$ (see Figure~\ref{fig:CE}
  for an illustration).
    \begin{figure}[htb]
  \centering
  \includegraphics[scale=0.5]{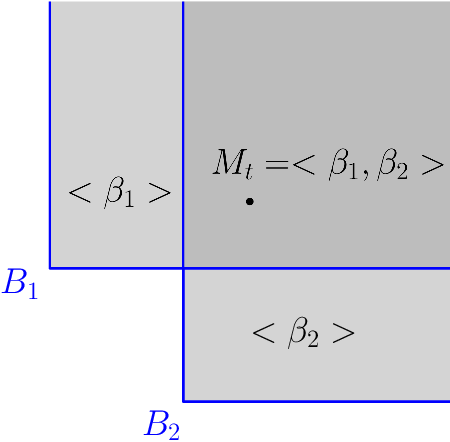}
  \caption{Two incomparable birth quadrants.}
  \label{fig:CE} 
  \end{figure}
    Take $t$ in the intersection of the two
  quadrants. Call $\beta_1$ a generator of the $1$-dimensional
  subspace of $\M_t$ spanned by $\Fb_1$, and call $\beta_2$ a
  counterpart for $\Fb_2$. Then:
  \[
  V^+_{\Fb_1} = \langle \beta_1\rangle \quad\quad 
  V^+_{\Fb_2} = \langle \beta_2\rangle \quad\quad
  V^-_{\Fb_1} = V^-_{\Fb_2} = 0
  \]
  As a result, we have neither $V^+_{\Fb_1}\subseteq V^-_{\Fb_2}$ nor $V^+_{\Fb_2} \subseteq V^-_{\Fb_1}$, which means that the family $\{(V^-_{\Fs,t}, V^+_{\Fs,t})\}_{\Fs:\mathrm{block}\ni t}$ is not disjoint in this example.
\end{exmp}
This lack of global disjointness calls for a special treatment to
establish the direct sum in our 2-d setting. For this we will combine
the above results with some new (more direct) arguments. To start
with, we consider a slightly different family, called $\cF_t=\{
(F_{\Fs,t}^-, F_{\Fs,t}^+)\}_{\Fs:\mathrm{block}\ni t}$, where the spaces
$F^\pm_{\Fs,t}$ are defined as follows:
%
%
\begin{equation}\label{eq:F+-}
\begin{array}{rl}
F_{\Fs,t}^+ &= \Ima_{\Fs,t}^- + V_{\Fs, t}^+ = \Ima_{\Fs,t}^- + \Ker_{\Fs,t}^+ \cap \Ima_{\Fs,t}^+,\\[0.5em]
F_{\Fs,t}^- &= \Ima_{\Fs,t}^- + V_{\Fs, t}^- = \Ima_{\Fs,t}^- + \Ker_{\Fs,t}^- \cap \Ima_{\Fs,t}^+.
\end{array}
\end{equation}
%
As the following lemma shows, we can work indifferently with $\cV_t$ or $\cF_t$ to study the spaces~$(\MRs_{\Fs})_t$:
\begin{lem}\label{lem:FvsV}
$F^+_{\Fs,t} = F^-_{\Fs,t} \oplus (\MRs_{\Fs})_t$ for any block~$\Fs$ and any~$t\in\Fs$.
\end{lem}
\begin{proof}
  From~(\ref{eq:F+-}) and Remark~\ref{rem_rpi} we deduce:
  \[
  F_{\Fs,t}^+ = V_{\Fs,t}^+ + \Ima_{\Fs,t}^- = V_{\Fs,t}^- + (\MRs_{\Fs})_t + \Ima_{\Fs,t}^- = F_{\Fs,t}^- + (\MRs_{\Fs})_t.
  \]
  Meanwhile, we have $(\MRs_{\Fs})_t \subseteq V^+_{\Fs,t}$ therefore:
  \[
  F^-_{\Fs,t} \cap (\MRs_{\Fs})_t = F^-_{\Fs,t} \cap V^+_{\Fs,t} \cap (\MRs_{\Fs})_t
  = V^-_{\Fs,t} \cap (\MRs_{\Fs})_t = 0.
  \]
The result follows.
\end{proof}

The reason for choosing~$\cF_t$ over~$\cV_t$ in our context (as in the
1-d setting) is that it is somewhat easier to work with.
%

The proof that the family of
submodules $\{\MRs_{\Fs}\}_{\Fs:\mathrm{block}}$ is in direct sum is
divided into 2 parts: first, we show that, for each individual block
type, the associated subfamily is in direct sum
(Proposition~\ref{prop:direct_sum_in}); second, we show that the sum
is also direct across block types
(Proposition~\ref{prop:direct_sum_inter}).
\begin{prop}\label{prop:direct_sum_in}
For any fixed block type, the submodules $\MRs_{\Fs}$, where $\Fs$ ranges over the blocks of this type,
are in direct sum.
\end{prop}
\begin{proof}
Submodules are in direct sum if and only if they are such
pointwise. Let then $t\in\sR^2$ be fixed. We focus on each block type
individually:

\medskip

\paragraph*{{\bf Horizontal bands} (including ones that extend to infinity vertically, either upwards or downwards or both).} 

Let $\lcut$ denote the trivial horizontal cut with
$\lcut^-=\emptyset$.  By Lemma~\ref{lem:disjoint_1d},
the family $\{(\Ima_{\tcut,t}^-, \Ima_{\tcut,t}^+)\}_{\tcut^+\ni t_y}$
is disjoint. It follows, by intersecting all the spaces of this family with $\Ima_{\lcut,t}^+$, that $\{(\Ima_{\tcut,t}^-\cap
\Ima_{\lcut,t}^+, \Ima_{\tcut,t}^+\cap\Ima_{\lcut,t}^+)\}_{\tcut^+\ni t_y}$ is also disjoint. By definition this is the same
family as $\{(\Ima_{\Fs,t}^-, \Ima_{\Fs,t}^+)\}_{\Fs:\mathrm{hband}\ni t}$. Then, by Lemma~\ref{disjoint-pres} the family
$\{(F_{\Fs,t}^-, F_{\Fs,t}^+)\}_{\Fs:\mathrm{hband} \ni t}$ itself is
disjoint. Hence, by Lemma~\ref{disjoint-decomp} the family of
subspaces $\{(\MRs_{\Fs})_t\}_{\Fs:\mathrm{hband}\ni t}$ is in direct sum.

\medskip

\paragraph*{{\bf Vertical bands} (including ones that extend to infinity horizontally, either to the left or to the right or both).} 

The treatment is symmetric.

\medskip

\paragraph{{\bf Death quadrants} (including ones that extend to infinity upwards or to the right or both).}

Take any finite family of distinct death quadrants $\Fs_1, \cdots,
\Fs_n$ that contain~$t$. Because they are all distinct,
there must be one of them (say~$\Fs_1$) that is not included
in the union of the others. Hence, there is some $u\geq t$ such
that $u\in\Fs_1\setminus\bigcup_{i>1} \Fs_i$. Now,
suppose there is some relation $\sum_{i=1}^n \alpha_i = 0$ with $\alpha_i\in
(\MRs_{\Fs_i})_t$ nonzero for all $i$. Then, by linearity of~$\rho_t^u$ we
have $\sum_{i=1}^n \rho_t^u(\alpha_i) = 0$. But each $\alpha_i$ with $i>1$ is
sent to zero through $\rho_t^u$ because $u$ lies outside~$\Fs_i$. Hence, $\rho_t^u(\alpha_1)=-\sum_{i=2}^n \rho_t^u(\alpha_i) = 0$. Meanwhile,
we have $\rho_t^u(\alpha_1)\neq 0$ because the restriction of $\rho_t^u$ to
$(\MRs_{\Fs_1})_t$ is injective by Lemma~\ref{lem:block-copies}. This raises
a contradiction.

\medskip

\paragraph{{\bf Birth quadrants} (including ones that extend to infinity downwards or to the left or both).}

All we need to prove is that, for any finite family of distinct birth
quadrants $\Fs_1, \cdots, \Fs_n$, there is at least one of them (say
$\Fs_1$) whose corresponding subspace $(\MRs_{\Fs_1})_t\subseteq \M_t$ is in
direct sum with the ones of the other quadrants in the family. The
result follows then from a simple induction on the size~$n$ of the
family.

Let then $\Fs_1, \cdots, \Fs_n$ be such a family. Each quadrant $\Fs_i$ is bounded to the left by a horinzontal cut $\lcut_i$ and to the bottom by a vertical cut $\bcut_i$. Up to reordering, we can assume that $\Fs_1$ has the rightmost horizontal cut and, in case of ties, it also has the uppermost vertical cut among the quadrants with the same horizontal cut. Formally:
\begin{align*}
\lcut_1^+ & \subseteq \bigcap_{i=2}^n \lcut_i^+\\
\bcut_1^+ & \subseteq \bigcap_{\begin{smallmatrix}i>1\\\lcut_i=\lcut_1\end{smallmatrix}} \bcut_i^+
\end{align*}
It follows that $\Fs_1$ contains none of the other
quadrants. Those can be partitioned into two subfamilies: the ones
(say $\Fs_2, \cdots, \Fs_k$) contain $\Fs_1$ strictly, while the
others ($\Fs_{k+1}, \cdots, \Fs_n$) neither contain~$\Fs_1$ nor are
contained in $\Fs_1$. See Figure~\ref{fig:birth_quadrants} for an
illustration. We analyze the two subfamilies separately.

\begin{figure}[htb]
\centering
\includegraphics[scale=0.5]{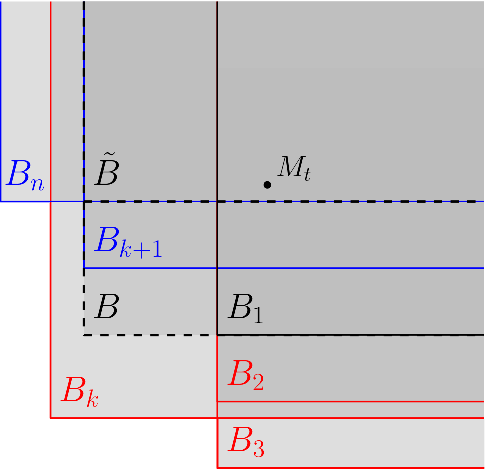}
\caption{Birth quadrants partitioned into two subfamilies}
\label{fig:birth_quadrants} 
\end{figure}

For every $i\in (1, k]$, we have both $\lcut_i^+\supseteq \lcut_1^+$
  and $\bcut_i^+\supseteq \bcut_1^+$, moreover we have either
  $\lcut_i^+\supsetneq \lcut_1^+$ or $\bcut_i^+\supsetneq \bcut_1^+$
  or both. It follows that
  $\Ima_{\lcut_i,t}^+ \subseteq \Ima_{\lcut_1,t}^+$ and
  $\Ima_{\bcut_i,t}^+ \subseteq \Ima_{\bcut_1,t}^+$, moreover either
  $\Ima_{\lcut_i,t}^+ \subseteq \Ima_{\lcut_1,t}^-$ or
  $\Ima_{\bcut_i,t}^+ \subseteq \Ima_{\bcut_1,t}^-$ or both. Hence,
\[
\Ima_{\Fs_i,t}^+ = \Ima_{\lcut_i,t}^+ \cap \Ima_{\bcut_i,t}^+ \subseteq \Ima_{\lcut_1,t}^+ \cap \Ima_{\bcut_1,t}^- + \Ima_{\lcut_1,t}^- \cap \Ima_{\bcut_1,t}^+ = \Ima_{\Fs_1,t}^-.
\]
Summing over $i=2,\cdots, k$ we obtain:
\begin{equation}\label{eq:epsilon}
\sum_{i=2}^k \Ima_{\Fs_i,t}^+ \subseteq \Ima_{\Fs_1,t}^-.
\end{equation}
For every $i\in (k, n]$, we have $\lcut_i^+\supsetneq\lcut_1^+$ and
  $\bcut_i^+\subsetneq\bcut_1^+$.  Let $\tilde\Fs = \bigcap_{i=k+1}^n \Fs_{i}$~---~this birth quadrant neither contains $\Fs_1$ nor is contained
  in~it. Let now $\Fs$ be the smallest quadrant containing both
  $\Fs_1$ and $\tilde\Fs$~---~this quadrant strictly contains them both.
It follows, using the same argument as in the case $i\in (1,k]$:
\begin{equation}\label{eq:zeta}
\Ima_{\Fs_1,t}^+ \cap \left(\sum_{i=k+1}^n \Ima_{\Fs_i,t}^+\right)
\subseteq \Ima_{\Fs_1,t}^+\cap \Ima_{\tilde\Fs,t}^+ =
\Ima_{\Fs,t}^+ \subseteq \Ima_{\Fs_1,t}^-.
\end{equation}
Combining~(\ref{eq:epsilon}) and~(\ref{eq:zeta}), we obtain:
\begin{align*}
(\MRs_{\Fs_1})_t\cap \left(\sum_{i=2}^k (\MRs_{\Fs_i})_t + \sum_{i=k+1}^n (\MRs_{\Fs_i})_t\right)
& \subseteq 
\Ima_{\Fs_1,t}^+ \cap \left(\sum_{i=2}^k \Ima_{\Fs_i,t}^+ + \sum_{i=k+1}^n \Ima_{\Fs_i,t}^+\right) \\
& = \sum_{i=2}^k \Ima_{\Fs_i,t}^+ + \Ima_{\Fs_1,t}^+ \cap \left(\sum_{i=k+1}^n \Ima_{\Fs_i,t}^+\right) \\
& \subseteq \Ima_{\Fs_1,t}^- \subseteq F_{\Fs_1,t}^-,
\end{align*}
which by Lemma~\ref{lem:FvsV} is itself in direct sum with
$(\MRs_{\Fs_1})_t$. Hence the result.
\end{proof}

To establish the direct sum across block types, we adopt the following
convention regarding blocks that belong to more than one type:
\begin{itemize}
\item All the blocks whose support extends to infinity both upwards and to the right are assigned to the birth quadrants.
\item Among the remaining blocks, the ones whose support extends to infinity upwards are assigned to the vertical bands, while the ones whose support extends to infinity to the right are assigned to the horizontal bands.
\end{itemize}

\begin{prop}\label{prop:direct_sum_inter}
Under the previous convention, the submodules $\displaystyle\bigoplus_{\Fs:\mathrm{bquad}} \MRs_{\Fs}$,
$\displaystyle\bigoplus_{\Fs:\mathrm{vband}} \MRs_{\Fs}$, $\displaystyle\bigoplus_{\Fs:\mathrm{hband}} \MRs_{\Fs}$ and $\displaystyle\bigoplus_{\Fs:\mathrm{dquad}}
\MRs_{\Fs}$ are in direct sum.
\end{prop}
\begin{proof}
Again, we only need to prove the direct sum pointwise. Let then
$t\in\sR^2$ be fixed. We order the block types as follows: birth
quadrants, vertical bands, horizontal bands, death quadrants. We will
prove that the summands of each block type are in direct sum with the
summands of the following block types in the sequence.

\medskip

\paragraph*{\bf Birth quadrants.}

Suppose that
\[
\left(\bigoplus\limits_{\Fs:\mathrm{bquad}\ni t} (\MRs_{\Fs})_t\right) \cap
 \left(
\bigoplus\limits_{\Fs:\mathrm{vband}\ni t} (\MRs_{\Fs})_t
+
\bigoplus\limits_{\Fs:\mathrm{hband}\ni t} (\MRs_{\Fs})_t
+
\bigoplus\limits_{\Fs:\mathrm{dquad}\ni t} (\MRs_{\Fs})_t
\right) \neq 0,
\]
where by our convention we treat all the blocks extending to infinity
both upwards and to the right as birth quadrants.  Take then a nonzero
vector~$\alpha$ in the intersection. It can be written as a linear
combination of nonzero vectors $\alpha_1, \cdots, \alpha_n$ taken from the
summands of finitely many birth quadrants $\Fs_1, \cdots, \Fs_n$,
but also as a linear combination of nonzero vectors $\beta_1, \cdots, \beta_m$
taken from the summands of finitely many blocks $\Fs'_1, \cdots,
\Fs'_m$ of other types:
$
\sum_{i=1}^{n} \alpha_i = \alpha = \sum_{j=1}^{m} \beta_j.
$

Pick a point $u\geq t\in\sR^2$ that is large enough so that it lies
outside the  blocks $\Fs'_1, \cdots, \Fs'_m$. Such a
point~$u$ exists because, by our convention, none of the blocks
$\Fs'_1, \cdots, \Fs'_m$ extends to infinity both upwards and to the
right. Meanwhile, $u$ still lies in the  birth
quadrants $\Fs_1, \cdots, \Fs_n$. Let us then consider the image of
$\alpha$ in $\M_{u}$ through the map $\rho_t^u$. On the one hand it is zero
since $\rho_t^u(\beta_j)=0$ for all $j$. On the other hand it is nonzero
since the restriction of $\rho_{t}^u$ to $\bigoplus_{i=1}^n (\MRs_{\Fs_i})_t$
is injective by Lemma~\ref{lem:block-copies} and Proposition~\ref{prop:direct_sum_in}. This raises a contradiction.

\medskip

\paragraph*{\bf Vertical bands.}

Suppose that
\[
\left(\bigoplus\limits_{\Fs:\mathrm{vband}\ni t} (\MRs_{\Fs})_t\right)
\cap
\left(
\bigoplus\limits_{\Fs:\mathrm{hband}\ni t} (\MRs_{\Fs})_t
+
\bigoplus\limits_{\Fs:\mathrm{dquad}\ni t} (\MRs_{\Fs})_t
\right) \neq 0,
\]
where by our convention we treat the blocks extending to infinity
upwards as vertical bands\footnote{The horizontal bands extending to
  infinity upwards have already been treated as birth quadrants.}.
Then we can reproduce the previous reasoning: take a nonzero
vector~$\alpha$ in the intersection, and decompose it as a sum of finitely
many nonzero vectors taken from the summands of vertical bands on the
one hand, as a sum of finitely many nonzero vectors taken from the
summands of horizontal bands or death quadrants on the other
hand. Pick then a point $u\geq t$ with $u_x=t_x$ and with $u_y$ large
enough so that $u$ lies outside all the horizontal bands and death
quadrants involved in the decomposition of~$\alpha$. By looking at the
image $\rho_t^u(\alpha)\in \M_u$ we can raise the same contradiction as
before.

\medskip

\paragraph*{\bf Horizontal bands.}

They are treated symmetrically to the vertical bands.
\end{proof}

\section{Sections and covering}
\label{sec:sections}

In this section and the next we still assume that $M$ is pfd and exact, and we want to show that the
submodules $\MRs_{\Fs}$ introduced in Proposition~\ref{prop:MRs}, for $\Fs$
ranging over all blocks, cover the whole module~$M$, which will conclude
the proof of Theorem~\ref{thm_key}. For this we work again with the
family of sections $\cF_t=\{(F^-_{\Fs, t}, F^+_{\Fs, t})\}_{\Fs:\mathrm{block}\ni t}$
defined in~(\ref{eq:F+-}), and we use the concept of a {\em covering} family,
borrowed from~\cite{cb}.
Given a vector space~$U$, $\{(F_\lambda^-, F_\lambda^+)\}_{\lambda\in\Lambda}$ {\em covers} $U$ if for every proper subspace
$X\subsetneq U$ there is a $\lambda\in\Lambda$ such that
\[
X + F_\lambda^- \neq X + F_\lambda^+.
\]
We say this family {\em strongly covers} $U$ if for all subspaces $X
 \subsetneq U$ and $Z \not\subseteq X$ there is a $\lambda\in\Lambda$ such
that
\[
X + (F_\lambda^- \cap Z) \neq X + (F_\lambda^+ \cap Z).
\]
The use of covering sections is justified by the following result
from~\cite{cb}:
\begin{lem}\label{covering-decomp}
Suppose that $\{(F_\lambda^-, F_\lambda^+)\}_{\lambda \in \Lambda}$ is
a family of sections that covers~$U$. For each $\lambda\in\Lambda$,
let $\MRs_\lambda$ be a subspace with $F^+_\lambda = \MRs_\lambda \oplus
F^-_\lambda$. Then, $U = \sum_{\lambda\in\Lambda}
\MRs_\lambda$. 
\end{lem}
Given a fixed index $t$, the proof in the 1-d
case~\cite{cb} proceeds by showing that the families of kernels and of
images, namely $\{(\Ker^-_{I,t},
\Ker^+_{I,t})\}_{I:\mathrm{interval}\ni t}$ and $\{(\Ima^-_{I,t},
\Ima^+_{I,t})\}_{I:\mathrm{interval}\ni t}$, both strongly
cover~$M_t$. It then applies the following result to deduce that the
family~$\{(F^-_{I,t}, F^+_{I,t})\}_{I:\mathrm{interval}\ni t}$
covers~$M_t$:
\begin{lem}\label{covering-pres}
If $\{(F_\lambda^-, F_\lambda^+)\}_{\lambda \in \Lambda}$ is a family
of sections that covers~$U$, and $\{G_\sigma^-, G_\sigma^+)\}_{\sigma
  \in \Sigma}$ is a family of sections that strongly covers $U$, then
the following family covers~$U$:
\[
\left\{(F_\lambda^- + G_\sigma^-\cap F_\lambda^+,\,
F_\lambda^- + G_\sigma^+\cap F_\lambda^+)\right\}_{(\lambda, \sigma) \in \Lambda \times \Sigma}.
\]
\end{lem}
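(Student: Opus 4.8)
\textbf{Proof plan for Lemma~\ref{covering-pres}.}
The plan is to argue by contraposition, exactly as one would in the one-dimensional case. Suppose the combined family fails to cover $U$: then there is a proper subspace $X \subsetneq U$ such that for every $(\lambda, \sigma) \in \Lambda \times \Sigma$ we have
\[
X + \left(F_\lambda^- + G_\sigma^- \cap F_\lambda^+\right) = X + \left(F_\lambda^- + G_\sigma^+ \cap F_\lambda^+\right).
\]
The goal is to produce, from this one $X$, two subspaces that witness the failure of the strong covering property of $\{(G_\sigma^-, G_\sigma^+)\}$, which would be a contradiction. The natural candidates are, for a suitably chosen $\lambda$, the subspaces $Y = X + F_\lambda^-$ and $Z = F_\lambda^+$: indeed the displayed equality says precisely that $Y + (G_\sigma^- \cap Z) = Y + (G_\sigma^+ \cap Z)$ for that $\lambda$ and every $\sigma$, so if we can arrange $Y \subsetneq U$ and $Z \not\subseteq Y$ we are done.

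First I would use the hypothesis that $\{(F_\lambda^-, F_\lambda^+)\}$ covers $U$. Applied to the proper subspace $X$, this gives some $\lambda \in \Lambda$ with $X + F_\lambda^- \neq X + F_\lambda^+$. Fix this $\lambda$ and set $Y = X + F_\lambda^-$ and $Z = F_\lambda^+$. Then $Z = F_\lambda^+ \not\subseteq X + F_\lambda^- = Y$, since otherwise $X + F_\lambda^+ \subseteq X + F_\lambda^-$, contradicting the choice of $\lambda$; this establishes $Z \not\subseteq Y$. It remains to check that $Y \subsetneq U$, i.e. that $Y$ is a proper subspace. This is where I expect the only real subtlety to lie: a priori $Y = X + F_\lambda^-$ could be all of $U$ even though $X$ is proper. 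However, if $Y = U$ then $Z = F_\lambda^+ \subseteq U = Y$, directly contradicting $Z \not\subseteq Y$ which we just proved; so in fact $Y \subsetneq U$ automatically. (Equivalently: $Z \not\subseteq Y$ forces $Y \neq U$.)

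With $Y \subsetneq U$ and $Z \not\subseteq Y$ in hand, the strong covering property of $\{(G_\sigma^-, G_\sigma^+) : \sigma \in \Sigma\}$ yields a $\sigma \in \Sigma$ with $Y + (G_\sigma^- \cap Z) \neq Y + (G_\sigma^+ \cap Z)$. Substituting back $Y = X + F_\lambda^-$ and $Z = F_\lambda^+$, this reads
\[
X + F_\lambda^- + \left(G_\sigma^- \cap F_\lambda^+\right) \neq X + F_\lambda^- + \left(G_\sigma^+ \cap F_\lambda^+\right),
\]
i.e. $X + \left(F_\lambda^- + G_\sigma^- \cap F_\lambda^+\right) \neq X + \left(F_\lambda^- + G_\sigma^+ \cap F_\lambda^+\right)$, contradicting our standing assumption on $X$ for the pair $(\lambda, \sigma)$. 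Hence no such $X$ exists and the combined family covers $U$. The argument is essentially formal modular-lattice bookkeeping; the one point worth stating carefully is the reduction that produces a legitimate pair $(Y, Z)$ — in particular the remark that $Z \not\subseteq Y$ already forces $Y$ to be proper — after which everything is a direct substitution.
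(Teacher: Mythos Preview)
Your proof is correct. The paper itself does not give a proof of this lemma: it is quoted verbatim as a result from~\cite{cb}, so there is no in-paper argument to compare against. Your argument is the standard one and is essentially what appears in the cited source; the only cosmetic remark is that the contrapositive framing is unnecessary---given any proper $X\subsetneq U$, your construction directly produces the pair $(\lambda,\sigma)$ witnessing $X + (F_\lambda^- + G_\sigma^-\cap F_\lambda^+) \neq X + (F_\lambda^- + G_\sigma^+\cap F_\lambda^+)$---but this changes nothing of substance.
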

From there the proof  concludes using Lemma~\ref{covering-decomp}.
Unfortunately, this strategy does not work in our 2-d setting, where
the family of images $\{(\Ima^-_{\Fs,t},
\Ima^+_{\Fs,t})\}_{\Fs:\mathrm{block}\ni t}$ may not strongly
cover~$M_t$, while the family of kernels $\{(\Ker^-_{\Fs,t},
\Ker^+_{\Fs,t})\}_{\Fs:\mathrm{block}\ni t}$ may not even cover it, as shown in the
following two examples:
\begin{exmp}\label{ex:strongcover}
  Consider the module~$M$ from Example~\ref{ex:disjoint}. Take $X = 0$,
  and for $Z$ take the linear span of $\beta_1+\beta_2$ in~$M_t$.
  Since $Z$ is $1$-dimensional, for any block~$\Fs$ such that
  $\Ima_{\Fs,t}^+\cap Z\neq 0$, $\Ima_{\Fs,t}^+$ must contain at least
  $Z$, therefore $\Fs$ must be included in $\Fb_1\cap \Fb_2$. But then
  $\Ima_{\Fs,t}^-=\Ima_{\Fs,t}^+=\M_t$, which means that $X+
  \Ima_{\Fs,t}^-\cap Z = Z = X+\Ima_{\Fs,t}^+\cap Z$. Hence, the
  family~$\{(\Ima^-_{\Fs,t}, \Ima^+_{\Fs,t})\}_{\Fs:\mathrm{block}\ni
    t}$ does not strongly cover~$\M_t$ in this example.
\end{exmp}
\begin{exmp}\label{ex:cover}
  Take for $M$ the direct sum of the modules associated with two death
  quadrants $\Fs_1$ and $\Fs_2$ whose upper-right corners are not
  comparable in the product order on $\sR^2$ (see Figure~\ref{fig:CEC}
  for an illustration).
    \begin{figure}[htb]
  \centering
  \includegraphics[scale=0.5]{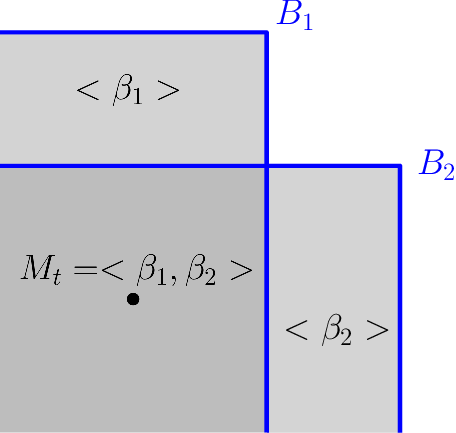}
  \caption{Two incomparable death quadrants.}
  \label{fig:CEC} 
  \end{figure}
    Let $t\in \Fs_1\cap \Fs_2$, and call $\beta_1$ (resp.~$\beta_2$) a
    generator of the 1-dimensional subspace of~$M_t$ spanned by
    $\Fs_1$ (resp. by $\Fs_2$). Take $X$ to be the linear span of
    $\beta_1+\beta_2$. Then, for any choice of block~$\Fs = (\lcut^+
    \cap \rcut^-)\times(\bcut^+\cap \tcut^-)$ containing~$t$, we have
    $\Ker^+_{\rcut, t}\in\{0, \langle \beta_1\rangle, M_t\}$ while
    $\Ker^+_{\tcut, t}\in\{0, \langle \beta_2\rangle, M_t\}$. This
    implies that $\Ker^+_{\rcut, t}\cap \Ker^+_{\tcut, t} \neq 0$ (and
    thus that $\Ker^+_{\Fs, t}$ is potentially different from
    $\Ker^-_{\Fs,t}$) only when $\Ker^+_{\rcut, t} = M_t$ or
    $\Ker^+_{\tcut, t} = M_t$.
    But then we have $\Ker^\pm_{\Fs,t} \in \{\langle
    \beta_1\rangle, \langle \beta_2\rangle, M_t\}$, which implies that
    $X+\Ker^-_{\Fs, t} = M_t = X+\Ker^+_{\Fs, t}$. Hence, the
    family~$\{(\Ker^-_{\Fs,t}, \Ker^+_{\Fs,t})\}_{\Fs:\mathrm{block}\ni t}$ does not
    cover~$\M_t$ in this example.
\end{exmp}
The lack of covering by kernels is particularly
troublesome in our setting. Since it impacts essentially the way we analyze
death quadrants, for now we will leave the contribution of the death
quadrants aside and focus on the rest of the blocks. For this we
introduce the following submodules $L,N$ of~$M$, defined by:
\[
L_t = F^+_{\sR^2,t} = V^+_{\sR^2,t} = \Ima^+_{\sR^2,t}  \quad \quad
N_t = F^-_{\sR^2,t} = V^-_{\sR^2,t} = \Ima^+_{\sR^2,t} \cap \Ker^-_{\sR^2,t} \subseteq L_t
\]
Intuitively, the submodule $N$ should be the one spanned by the block
modules associated to death quadrants that are not $\sR^2$
itself. Here we prove the following result, deferring the analysis
of~$N$ to the next section:
\begin{prop}\label{bigcover}
  For $\Fs$ ranging over the birth quadrants (including~$\sR^2$) and the {\em strict bands} (i.e. bands that are not quadrants, noted {\rm sband} for short), we have:
  \begin{equation}\label{eq:bigcover}
    M = N\ +\ \bigoplus_{\begin{smallmatrix}\Fs:\ \mathrm{bquad}\\\ \mathrm{or\ sband}\end{smallmatrix}} \MRs_{\Fs}
    \end{equation}
\end{prop}
%
%
%
To prove this result we use the strong covering property of
image and kernel sections
in the 1-d setting:
%
\begin{lem}[\cite{cb}]\label{lem:strong-cover_1d}
  Given a fixed $t\in\sR^2$, for any subsets $X\subsetneq \M_t$ and
  $Z\nsubseteq X$, there is a horizontal cut $\lcut$ with $t_x\in
  \lcut^+$ such that $\Ima_{\lcut,t}^-\cap Z \subseteq X \nsupseteq
  \Ima_{\lcut,t}^+\cap Z$. Similarly, there is a vertical cut $\bcut$
  with $t_y\in \bcut^+$ such that $\Ima_{\bcut,t}^-\cap Z \subseteq X
  \nsupseteq \Ima_{\bcut,t}^+\cap Z$. Same for kernels.
\end{lem}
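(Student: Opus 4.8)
The statement is a purely one–dimensional fact about the restriction of $M$ to the horizontal (resp.\ vertical) line through~$t$, so I would reduce it to a claim about $\sR$‑indexed pfd persistence modules and invoke the corresponding result from~\cite{cb}. Concretely: fix $t\in\sR^2$, and let $N = \{\M_s : s_y = t_y\}$ be the $\sR$‑indexed module obtained by restricting $M$ to the horizontal line $\sR\times\{t_y\}$, reindexed by the first coordinate so that $N_x = \M_{(x,t_y)}$ and its internal maps are the $h_{(x,t_y)}^{(x',t_y)}$. Being a restriction of a pfd bimodule, $N$ is pfd (or, in the more general setting of Remark~\ref{rk:decomp}, it inherits the ascending and descending chain conditions on kernels and images). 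The spaces $\Ima_{\hcut,t}^\pm$ and $\Ker_{\hcut,t}^\pm$ from Definition~\ref{def:ima_ker_1d} are, by inspection, exactly the one–dimensional image/kernel sections of $N$ at the point $t_x$ associated to the cut $\hcut$ viewed as a cut of $\sR$. Therefore the lemma is literally the statement that, in a pfd $\sR$‑indexed module, the families of image sections $\{(\Ima_{\hcut,t}^-,\Ima_{\hcut,t}^+) : t_x\in\hcut^+\}$ and of kernel sections $\{(\Ker_{\hcut,t}^-,\Ker_{\hcut,t}^+) : t_x\in\hcut^-\}$ each strongly cover $N_{t_x}$; the vertical case is identical with the roles of the coordinates exchanged.

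The core one–dimensional fact I would cite is the strong covering property of image and kernel sections established in the analysis of pfd $\sR$‑indexed modules in~\cite{cb}. For completeness I would also recall the short argument for, say, the image case. Let $Y\subsetneq \M_t$ and $Z\nsubseteq Y$; pick $z\in Z\setminus Y$. Using the descending chain condition on images along the ray $\{(x,t_y) : x\le t_x\}$, the increasing filtration $x\mapsto \Ima h_{(x,t_y)}^{(t_x,t_y)}$ together with the decreasing filtration of images approaching $t_x$ from the right determines, via the realization lemma, the threshold decorated real $\hcut^\updownarrow$ at which $z$ first enters $Y + (\text{image at that level})$; one then checks that taking $\hcut$ slightly below this threshold gives $\Ima_{\hcut,t}^-\cap Z\subseteq Y$ while taking $\hcut$ at or slightly above it gives a vector of $Z$ in $\Ima_{\hcut,t}^+$ but not in $Y$. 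The kernel case is dual, run on the decreasing filtration $x\mapsto \Ker h_t^{(x,t_y)}$ along $\{(x,t_y) : x\ge t_x\}$ using the ascending chain condition on kernels.

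In short, there is no genuine obstacle: the only subtlety is bookkeeping, namely confirming that the four families of sections named in the paragraph preceding the lemma do coincide with the one–dimensional image/kernel section families of the horizontal and vertical restrictions of $M$, after which the result is a direct quotation of~\cite{cb}. Accordingly, I would phrase the proof as: ``This is immediate from the strong covering property of image and kernel sections for pfd $\sR$‑indexed modules~\cite{cb}, applied to the restrictions of $M$ to the horizontal and vertical lines through $t$,'' and leave the elementary verification to the reader. The one point worth a sentence of care is that the chain conditions used in~\cite{cb} are available here, either because $M$ is pfd or, in the generality of Remark~\ref{rk:decomp}, by hypothesis, and they pass to line restrictions since kernels and images of restricted maps are subspaces of the (unchanged) spaces $\M_s$.
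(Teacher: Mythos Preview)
Your proposal is correct and matches the paper's approach exactly: the paper does not give a separate proof of this lemma but simply states, in the sentence immediately preceding it, that it follows from~\cite{cb} applied to the restrictions of~$M$ to the horizontal and vertical lines through~$t$. Your additional care in noting that the pfd hypothesis (or the chain conditions of Remark~\ref{rk:decomp}) passes to line restrictions is appropriate and implicit in the paper's treatment.
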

The proof of the 1-d analogue of Proposition~\ref{bigcover} (Lemma~6.1 in~\cite{cb}) proceeds by contradiction: assuming there is an index~$t$ such that $N_t + \sum_{\Fs} (\MRs_{\Fs})_t\subsetneq M_t$, it applies Lemma~\ref{lem:strong-cover_1d} to exhibit some interval $\Fs'$ (the 1-d analogue of a block) such that $F^-_{\Fs',t} \subseteq N_t + \sum_{\Fs} (\MRs_{\Fs})_t \nsupseteq F^+_{\Fs',t}$, then it deduces by Lemma~\ref{lem:FvsV}  that $(\MRs_{\Fs'})_t \nsubseteq N_t + \sum_{\Fs} (\MRs_{\Fs})_t$, a contradiction. Here we follow the same approach, applying Lemma~\ref{lem:strong-cover_1d} in each dimension sequentially, then combining the results using the exactness of~$M$ (via Lemma~\ref{b-ki}) in order to exhibit a block~$\Fs'$ that yields the same contradiction.
\begin{proof}[Proof of Proposition~\ref{bigcover}]
  Given a fixed $t\in\sR^2$, let $X = N_t + \sum_{\Fs} (\MRs_{\Fs})_t$, where $\Fs$ ranges over those birth quadrants (including $\sR^2$) and strict bands that contain~$t$. Suppose for a contradiction that $X \subsetneq \M_t$. Then, apply
Lemma~\ref{lem:strong-cover_1d} with $Z=\M_t$ to get a
horizontal cut $\lcut$ such that $t_x\in \lcut^+$ and
$\Ima_{\lcut,t}^- \subseteq X \nsupseteq \Ima_{\lcut,t}^+$.  Apply then Lemma~\ref{lem:strong-cover_1d} again, this time with $Z = \Ima_{\lcut,t}^+$, to get a
vertical cut $\bcut$ such that $t_y\in \bcut^+$ and $\Ima_{\bcut,t}^-
\cap \Ima_{\lcut,t}^+ \subseteq X \nsupseteq \Ima_{\bcut,t}^+ \cap
\Ima_{\lcut,t}^+$.
Note that the cuts $\lcut, \bcut$ cannot be both trivial, as otherwise (for $\lcut^-=\emptyset=\bcut^-$) we would have
\[
\Ima_{\bcut,t}^+
\cap \Ima_{\lcut,t}^+ = \Ima^+_{\sR^2, t} = F^+_{\sR^2,t} = F^-_{\sR^2,t} + (\MRs_{\sR^2})_t = N_t + (\MRs_{\sR^2})_t \subseteq X,
\]
which contradicts what precedes. Hence, either
$\lcut^-\neq\emptyset$, or $\bcut^-\neq\emptyset$, or both. We
distinguish these three cases below:

\medskip

\paragraph*{\bf Case $\lcut^-\neq\emptyset$ and $\bcut^-\neq\emptyset$.}

Let $\Fs'$ be the birth quadrant~$\lcut^+\cap\bcut^+$. Then,
we have $\Ima^+_{\Fs',t} = \Ima^+_{\lcut,t} \cap \Ima^+_{\bcut,t}$, and 
by Lemma~\ref{b-ki} we also have $\Ker^-_{\Fs',t}\subseteq \Ima^-_{\lcut,t} + \Ima^-_{\bcut,t}$, which gives:
 \begin{align*}
 F_{\Fs',t}^+
& = \Ima_{\Fs',t}^- + \Ima_{\Fs',t}^+\cap \M_t
 = \Ima_{\Fs',t}^+ = \Ima^+_{\lcut,t} \cap \Ima^+_{\bcut,t} \nsubseteq X\\
 F_{\Fs',t}^-
& 
 \subseteq \Ima_{\Fs',t}^- + \Ima_{\Fs',t}^+ \cap (\Ima^-_{\lcut,t} + \Ima^-_{\bcut,t})
 = \Ima_{\Fs',t}^-
 \subseteq \Ima^+_{\lcut,t}\cap \Ima^-_{\bcut,t} + \Ima^-_{\lcut,t}
 \subseteq X.
 \end{align*}
 Hence, by Lemma~\ref{lem:FvsV} we have $(\MRs_{\Fs'})_t \nsubseteq X$, which contradicts the definition of~$X$.
 
\medskip

\paragraph*{\bf Case $\lcut^-=\emptyset$ and $\bcut^-\neq\emptyset$.}

By Lemma~\ref{lem:strong-cover_1d}, applied with $Z=\Ima^+_{\lcut,t}\cap\Ima^+_{\bcut,t}$, there is a vertical cut $\tcut$ such that $t\in\tcut^-$ and
\[
\Ima^+_{\lcut,t}\cap\Ima^+_{\bcut,t}\cap\Ker^-_{\tcut,t} \subseteq X \nsupseteq \Ima^+_{\lcut,t}\cap\Ima^+_{\bcut,t}\cap\Ker^+_{\tcut,t}
\]
Let $\Fs'$ be the horizontal band $\sR\times (\bcut^+\cap\tcut^-)$. Lemma~\ref{b-ki} tells us that $\Ker^-_{\tcut,t}\subseteq \Ima^+_{\lcut,t}$ and $\Ker^-_{\rcut,t}\subseteq \Ima^-_{\bcut,t}$, where $\rcut$ is the trivial cut with $\rcut^+=\emptyset$. Then:
\begin{align*}
  \Ima^-_{\Fs',t} &= \Ima^-_{\bcut,t}\cap \Ima^+_{\lcut,t} \subseteq X\\
  \Ima^+_{\Fs',t}\cap\Ker^-_{\Fs',t} &=  \Ima^+_{\lcut,t}\cap\Ima^+_{\bcut,t}\cap(\Ker^-_{\tcut,t}+\Ker^-_{\rcut,t})\\
  &\subseteq \Ima^+_{\lcut,t}\cap\Ima^+_{\bcut,t}\cap(\Ker^-_{\tcut,t}+\Ima^-_{\bcut,t})\\
  &= \Ima^+_{\lcut,t}\cap\Ima^+_{\bcut,t}\cap\Ker^-_{\tcut,t} + \Ima^+_{\lcut,t}\cap\Ima^-_{\bcut,t} \subseteq X\\
  \Ima^+_{\Fs',t}\cap\Ker^+_{\Fs',t} &\supseteq \Ima^+_{\lcut,t}\cap\Ima^+_{\bcut,t}\cap\Ker^+_{\tcut,t} \nsubseteq X
\end{align*}
Hence, $F^-_{\Fs',t}\subseteq X \nsupseteq F^+_{\Fs',t}$, which by Lemma~\ref{lem:FvsV} implies that $(\MRs_{\Fs'})_t \nsubseteq X$, thus contradicting the definition of~$X$.

\medskip

\paragraph*{\bf Case $\lcut^-\neq\emptyset$ and $\bcut^-=\emptyset$.}

This case is symmetric to the previous one and therefore raises the same
contradiction.
\end{proof}

\section{Completion of the proof}
\label{sec:completion}

We will now show that the sum in~(\ref{eq:bigcover}) is direct
(Corollary~\ref{cor:N_direct-sum}) and that $N$ itself decomposes as a
direct sum of block modules (Corollary~\ref{cor:N_decomp}), which will
conclude the proof of Theorem~\ref{thm_key}. For completeness, we will
also show that the submodules $\MRs_{\Fs}$ provide an internal
direct-sum decomposition of~$M$ (Corollary~\ref{cor:internal-ds}).

\medskip

The proof that the sum in~(\ref{eq:bigcover}) is direct proceeds in two steps, given by the following two Lemmas.

\begin{lem}\label{lem:Nds1}
The submodules $\displaystyle \left(N+\bigoplus_{\Fs:\mathrm{sband}}\MRs_\Fs\right)$ and $\displaystyle \bigoplus_{\Fs:\mathrm{bquad}} \MRs_\Fs$ are in direct sum.
\end{lem}
\begin{proof}
  Suppose the contrary, and let $t\in\sR^2$ such that the intersection
  between the two submodules is non-trivial. Then there are $\alpha\in
  N_t$, $\alpha_1\in(\MRs_{\Fs_1})_t$, $\cdots$,
  $\alpha_r\in(\MRs_{\Fs_r})_t$ and
  $\alpha_{r+1}\in(\MRs_{\Fs_{r+1}})_t$, $\cdots$,
  $\alpha_n\in(\MRs_{\Fs_n})_t$ such that $\Fs_1, \cdots, \Fs_r$ are
  strict bands, $\Fs_{r+1}, \cdots, \Fs_n$ are birth quadrants, and we
  have
  \[
  \alpha + \sum_{i=1}^r \alpha_i = \sum_{i=r+1}^n \alpha_i \neq 0.
  \]
  The strict bands being not birth quadrants, there is some $u\geq t$
  such that $u\notin \bigcup_{i=1}^r \Fs_i$. Moreover, since
  $\alpha\in N_t \subseteq \Ker^-_{\sR^2,t}$, we have
  $\alpha=\alpha_h+\alpha_v$ for some $\alpha_h \in \Ker^-_{\rcut,t}$
  and $\alpha_v\in\Ker^-_{\tcut,t}$, where $\rcut, \tcut$ are the
  trivial cuts with $\rcut^+=\tcut^+=\emptyset$. Then, by the
  Realization Lemma~\ref{lem:realization}, there exist finite
  coordinates $x\geq t_x$ and $y\geq t_y$ such that
  $\alpha_h\in\Ker\rho_t^{(x,t_y)}$ and
  $\alpha_v\in\Ker\rho_t^{(t_x,y)}$. Letting $v$ be the
  point with coordinates $(\max\{u_x, x\},\,\max\{u_y,y\})$, we
  obtain:
    \[
    \rho_t^v\left(\alpha+\sum_{i=1}^r \alpha_i\right) =0.
    \]
    But since $\rho_t^v$ restricted to $\bigoplus_{i={r+1}}^n
    (\MRs_{\Fs_i})_t$ is injective (Lemma~\ref{lem:block-copies} and Proposition~\ref{prop:direct_sum_in}), we
    have $\rho_t^v(\sum_{i=r+1}^n \alpha_i) \neq 0$. This raises a
    contradiction.
\end{proof}

\begin{lem}\label{lem:Nds2}
The submodules $N$ and $\displaystyle \bigoplus_{\Fs:\mathrm{sband}}\MRs_\Fs$ are in direct sum.
\end{lem}
\begin{proof}
Suppose once again the contrary, and let $t\in\sR^2$ be such that the two
submodules have a non-trivial intersection. Then there are $\alpha\in N_t$, $\alpha_1\in(\MRs_{\Fs_1})_t$, $\cdots$, $\alpha_n\in(\MRs_{\Fs_n})_t$ such that $\Fs_1, \cdots, \Fs_n$ are strict bands and
\[
\alpha =\sum_{i=1}^n \alpha_i \neq 0.
\]
Assume without loss of generality that $\Fs_1, \cdots, \Fs_r$ are
horizontal bands while $\Fs_{r+1}, \cdots, \Fs_n$ are vertical
bands. Assume also (still without loss of generality) that none of the
$\alpha_i$'s are zero. Since the bands $\Fs_{r+1}, \cdots, \Fs_n$ are
not birth quadrants, there is some point $u$ with $u_x\geq t_x$ and
$u_y=t_y$ such that $u\notin\bigcup_{i=r+1}^n \Fs_i$. Then, we have
$\rho_t^u(\sum_{i=r+1}^n \alpha_i) = 0$, while
$\rho_t^u(\sum_{i=1}^r \alpha_i) \neq 0$ since the restriction of $\rho_t^u$ to $\bigoplus_{i=1}^r(\MRs_{\Fs_i})_t$ is injective by Lemma~\ref{lem:block-copies} and Proposition~\ref{prop:direct_sum_in}. Thus,
\begin{equation}\label{eq:betas}
\beta = \sum_{i=1}^r \beta_i\neq 0,
\end{equation}
where $\beta=\rho_t^u(\alpha)\in N_u$ and each $\beta_i =
\rho_t^u(\alpha_i) \in (\MRs_{\Fs_i})_u$. Now, we have $N_u\subseteq
\Ima^+_{\sR^2,u} = F^+_{\sR^2,u}$. As we saw in the proof of
Proposition~\ref{prop:direct_sum_in} (case of horizontal bands), the
family of sections $\{(F^-_{\sR^2, u},
F^+_{\sR^2,u}),\,(F^-_{\Fs_1,u}, F^+_{\Fs_1,u}),\,\cdots,\,
(F^-_{\Fs_r,u}, F^+_{\Fs_r,u})\}$ is disjoint. Since $(F^-_{\sR^2, u},
F^+_{\sR^2,u})$ is the minimum element in this family, meaning that
$F^+_{\sR^2, u}\subseteq F^-_{\Fs_i,u}$ for every~$i$, the family
$\{(0, F^+_{\sR^2,u}),\,(F^-_{\Fs_1,u}, F^+_{\Fs_1,u}),\,\cdots,\,
(F^-_{\Fs_r,u}, F^+_{\Fs_r,u})\}$ is also disjoint, which by
Lemma~\ref{disjoint-decomp} implies that $F^+_{\sR^2, u}$ is in direct
sum with $\bigoplus_{i=1}^r(\MRs_{\Fs_i})_u$. As a subspace of
$F^+_{\sR^2,u}$, $N_u$ itself is also in direct sum with
$\bigoplus_{i=1}^r(\MRs_{\Fs_i})_u$, hereby
contradicting~(\ref{eq:betas}).
\end{proof}

It follows from Lemmas~\ref{lem:Nds1} and~\ref{lem:Nds2} that the sum in~(\ref{eq:bigcover}) is direct, that is:
\begin{cor}\label{cor:N_direct-sum}
  $\displaystyle M = N\ \oplus\ \bigoplus_{\begin{smallmatrix}\Fs:\ \mathrm{qbirth}\\\ \mathrm{or\ sband}\end{smallmatrix}} \MRs_{\Fs}$.
\end{cor}

Now we show that $N$ decomposes as a direct sum of block
modules. For this we use vector-space duality. Let~$N^*$ be
the pointwise dual of~$N$. Since duality is a contravariant functor,
$N^*$ is a persistence module indexed over $(\sR^\op)^2$, where
$\sR^\op$ denotes the poset $\sR$ with the opposite order $\geq$. 
\begin{lem}\label{lem:N_exact}
  $N^*$ is pfd and exact.
\end{lem}
\begin{proof}
  We first need to show that $N$ itself is pfd and exact. That it is pfd follows immediately from the fact that it is a submodule of the pfd module~$M$. For exactness, let $s\leq t\in\sR^2$ and take an element $\delta\in N_t$ that has preimages $\beta\in N_{(t_x,s_y)}$ and $\gamma\in N_{(s_x, t_y)}$. Then, by exactness of~$M$, $\beta$ and $\gamma$ have a common preimage $\alpha\in M_s$. Let us show that $\alpha\in N_s$.

  First of all, by the Transportation Corollary~\ref{cor:transportation}, we know that $\alpha\in (\rho_s^t)^{-1}(N_t)\subseteq (\rho_s^t)^{-1}(\Ker^-_{\sR^2,t}) = \Ker^-_{\sR^2,s}$. Meanwhile, since $\beta\in N_{(t_x,s_y)}\subseteq \Ima^+_{\sR^2,(t_x,s_y)}$, for any $u\leq s$ with $u_x=s_x$ there is some preimage $\beta_u$ of $\beta$ in $M_{(t_x, u_y)}$, which, by exactness of~$M$, implies that there is some common preimage $\alpha_u$ of $\alpha$ and $\beta_u$ in $M_u$. Thus, $\alpha\in \Ima^+_{\bcut, s}$, where $\bcut$ is the trivial vertical cut with $\bcut^-=\emptyset$. Symmetrically,  and using $\gamma$, we have $\alpha\in \Ima^+_{\lcut,s}$ where $\lcut$ is the trivial horizontal cut with $\lcut^-=\emptyset$. Hence $\alpha$ belongs also to $\Ima^+_{\sR^2,s}$ and therefore to~$N_s$. This means that $N$ is exact.

 Now, duality sends finite-dimensional vector spaces to finite-dimensional vector spaces, hence $N^*$ is pfd just as~$N$. And since duality is an additive and exact functor, for every $s\leq t$ the exact sequence  
\[\xymatrix@C=110pt{
N_s \ar^-{\scriptstyle \phi\,=\,\left(h_s^{(t_x, s_y)},\; v_s^{(s_x, t_y)}\right)}[r] & N_{(t_x, s_y)} \oplus N_{(s_x, t_y)} \ar^-{\psi\,=\,v_{(t_x, s_y)}^t - h_{(s_x, t_y)}^t}[r] & N_t
}\]
associated with the commutative diagram
\[
\xymatrix{
N_{(s_x, t_y)}\ar^-{h_{(s_x, t_y)}^t}[rr] && N_t\\\\
N_s\ar_-{h_s^{(t_x, s_y)}}[rr]\ar^-{v_s^{(s_x, t_y)}}[uu] && N_{(t_x, s_y)}\ar_-{v_{(t_x, s_y)}^t}[uu]
}
\]
turns into the exact sequence
\[
\xymatrix@C=110pt{
N_s^* \ar@{<-}^-{\scriptstyle \phi^*\,=\,{h_s^{(t_x, s_y)}}^* +  {v_s^{(s_x, t_y)}}^*}[r] & N_{(t_x, s_y)}^* \oplus N_{(s_x, t_y)}^* \ar@{<-}^-{\psi^*\,=\,\left({v_{(t_x, s_y)}^t}^*,\, -{h_{(s_x, t_y)}^t}^*\right)}[r] & N_t^*
}\]
which means that the sequence
\[
\xymatrix@C=110pt{
N_s^* \ar@{<-}^-{\scriptstyle \phi^*\,=\,{h_s^{(t_x, s_y)}}^* -  {v_s^{(s_x, t_y)}}^*}[r] & N_{(t_x, s_y)}^* \oplus N_{(s_x, t_y)}^* \ar@{<-}^-{\psi^*\,=\,\left({v_{(t_x, s_y)}^t}^*,\, {h_{(s_x, t_y)}^t}^*\right)}[r] & N_t^*
}\]
associated with the diagram
\[
\xymatrix{
N_{(s_x, t_y)}^*\ar@{<-}^-{{h_{(s_x, t_y)}^t}^*}[rr] && N_t^*\\\\
N_s^*\ar@{<-}_-{{h_s^{(t_x, s_y)}}^*}[rr]\ar@{<-}^-{{v_s^{(s_x, t_y)}}^*}[uu] && N_{(t_x, s_y)}^*\ar@{<-}_-{{v_{(t_x, s_y)}^t}^*}[uu]
}
\]
is also exact. Hence the result.
\end{proof}

We can now apply Corollary~\ref{cor:N_direct-sum} to~$N^*$, and add the following  observation:

\begin{lem}\label{lem:bot}
For all $t\in(\sR^\op)^2$,  $\Ima^+_{(\sR^\op)^2,t}(N^*) = 0$.
\end{lem}
\begin{proof}
  Denote by $X^\bot$ the annihilator of any subspace $X\subseteq N_t$: 
  \[
  X^\bot = \{\phi\in N_t^* \mid \phi(\alpha)=0\ \forall
  \alpha \in X\}.
  \]
  Since taking the annihilator turns sums into intersections and kernels into images, we have:
\[
\Ima^+_{(\sR^\op)^2,t}(N^*) = \left(\Ker^-_{\sR^2,t}(N)\right)^\bot = N_t^\bot =0.
\]
\end{proof}

It follows that $N^*$ decomposes as a direct sum of block modules
indexed over~$(\sR^\op)^2$, and by duality:
\begin{cor}\label{cor:N_decomp}
  $N$ decomposes as a
direct sum of block modules indexed over~$\sR^2$.
\end{cor}
Corollaries~\ref{cor:N_direct-sum} and~\ref{cor:N_decomp} conclude the
proof of Theorem~\ref{thm_key}. Note that one can further show that
$N$ has surjective internal morphisms, so that all the blocks involved
in its decomposition are death quadrants. This is not mandatory for
the proof of the main theorem, however it justifies our previous
intuition that $N$ is the submodule of $M$ spanned by the block
modules associated to death quadrants (except $\sR^2$ itself since
$N_t\subseteq \Ker^-_{\sR^2,t}$).

\medskip

For completeness, we also show (a posteriori, using the Decomposition
Theorem~\ref{thm_key}) that the submodules~$\MRs_{\Fs}$, for $\Fs$
ranging over all blocks, do give an internal direct-sum decomposition
of~$M$.
\begin{cor}\label{cor:internal-ds}
$\displaystyle M = \bigoplus_{\Fs:\mathrm{block}} \MRs_\Fs$.
\end{cor}
\begin{proof}
  By Theorem~\ref{thm_key} we know that $M$ decomposes as follows:
  \[
  M \simeq \bigoplus_{\Fs:\mathrm{block}} \field_\Fs^{n_\Fs},
  \]
where $n_\Fs\in\sN$ denotes the multiplicity of the
summand~$\field_\Fs$ in the decomposition. Then, for each~$\Fs$,
Lemmas~\ref{lem:C_f_mult} and~\ref{lem:block-copies} ensure that
$\MRs_\Fs \simeq \field_\Fs^{n_\Fs}$, while
Propositions~\ref{prop:direct_sum_in} and~\ref{prop:direct_sum_inter}
ensure that the $\MRs_\Fs$'s are in direct sum in~$M$. Hence, at every index
$t\in\sR^2$ we have $M_t \simeq \bigoplus_{\Fs:\mathrm{block}}
(\MRs_\Fs)_t$, and therefore $M_t = \bigoplus_{\Fs:\mathrm{block}}
(\MRs_\Fs)_t$ since $M_t$ is finite-dimensional.
\end{proof}

\section{Applications}
\label{sec:applications}

To conclude the paper, we briefly discuss some of the implications of our
decomposition theorem.

\subsection{Barcodes and stability for exact pfd bimodules}

By Theorem~\ref{thm_key}, to any exact pfd persistence bimodule
$M$ we can associate the multiset of blocks involved in
its decomposition~(\ref{eq:decomp}). This multiset is called the {\em
  barcode} of~$M$ and denoted by~$\barcode(M)$. The following isometry result follows\footnote{Strictly speaking, the result in~\cite{b-shdidpm-16,bl-aszpm-16} is stated for 
bimodules indexed over the open half-plane above the anti-diagonal $x+y=0$. However, a careful look at the proof reveals that the result extends easily to 
bimodules indexed over~$\sR^2$.} then from~\cite{b-shdidpm-16,bl-aszpm-16}:
\begin{cor}\label{cor:stability_pfd}
For any exact pfd persistence bimodules $M$ and $N$ we have:
\[
\disti(M, N) = \dist_b(\barcode(M),\ \barcode(N)),
\]
where $\disti$ denotes the {\em interleaving
  distance} as defined in~\cite{l-oid-11}, and  where $\distb$ denotes the bottleneck distance as defined in~\cite{b-shdidpm-16,bl-aszpm-16}.
\end{cor}

\subsection{Bimodules indexed over a rectangle or the open half-plane above the anti-diagonal}
\label{sec:rectangle}

We can easily adapt our main theorem to decompose bimodules that are
indexed only over a rectangle in the plane, possibly extending to
infinity. The proof is given in
Appendix~\ref{sec:proof_thm_decomp_rect}.
\begin{thm}\label{thm:decomp_rect}
Let~$M$ be a bimodule indexed over some rectangle~$\Rs\subseteq\sR^2$.  If~$M$ is pfd and exact, then it is block-decomposable, more precisely:
\[
M  \simeq \bigoplus_{\Fs\in \barcode(M)} \field_{\Fs\cap\Rs}.
\]
Moreover, the decomposition is unique up to isomorphism and reordering of the terms (note that two blocks~$\Fs, \Fs'$ having the same intersection with~$\Rs$ yield the same indecomposable $\field_{\Fs\cap\Rs}=\field_{\Fs'\cap\Rs}$). 
\end{thm}
We can also adapt the decomposition theorem to decompose bimodues that
are indexed over the open half-plane~$\sU$ above the anti-diagonal:
$\sU = \{t\in\sR^2 \mid t_x+t_y>0\}$. The proof is given in
Appendix~\ref{sec:proof_thm_decomp_U}; it follows the scheme
of~\cite[Section~5]{botnan2018decomposition} but does not make use of
the general decomposition result in~\cite{botnan2018decomposition}.
\begin{thm}\label{thm:decomp_U}
Let~$M$ be a bimodule indexed over~$\sU$.  If~$M$ is pfd and exact,
then it is block-decomposable, more precisely:
\[
M  \simeq \bigoplus_{\Fs\in \barcode(M)} \field_{\Fs\cap\sU}.
\]
Moreover, the decomposition is unique up to isomorphism and reordering of the terms (again, two blocks~$\Fs, \Fs'$ having the same intersection with~$\sU$ yield the same indecomposable $\field_{\Fs\cap\sU}=\field_{\Fs'\cap\sU}$). 
\end{thm}

\subsection{Interlevel-sets persistence}
\label{sec:interlevel-sets}

We now consider the bimodules that arise in the study of
interlevel-sets persistence, which served as the initial motivation
for this work.  Let $\sU=\{t\in\sR^2 \mid t_x+t_y > 0\}$ as in the
previous section. We observe that $\sU$~is naturally identified with
the set of nonempty bounded open intervals of $\sR$ via the following
bijection:
\[
\sR \supset (x,y) \mapsto (-x,y) \in\sU.
\]
Moreover, if we equip $\sU$ with the product order inherited from
$\sR^2$, and the set of bounded open intervals with the inclusion
order, then the above bijection is an isomorphism of posets.

Given now a topological space~$\sX$ and an $\sR$-valued function
$\func:\sX\to\sR$, let $\cS_\func$ denote the {\em interlevel-sets
  filtration} of~$\func$, which assigns the space $(\cS_\func)_s =
\func^{-1}((-t_x,t_y))$ to any point $t\in\sU$. $\cS_\func$ can be
viewed as a functor from the poset $\sU$ to the category of
topological spaces. The composition $\Hgr\circ\cS_\func$ (where $\Hgr$
is a shorthand for singular homology with coefficients in a fixed
field~$\field$) is then a functor from $\sU$ to the category of
$\field$-vector spaces. The map $\func$ is called {\em pfd}
whenever $\Hgr\circ\cS_\func$ is a pfd
module. Note that this module is always exact because, for any $s<
t\in\sU$, the following diagram
\[
\xymatrix{
\func^{-1}((-s_x, t_y)) \ar^-{\subseteq}[r] & \func^{-1}((-t_x, t_y))\\
\func^{-1}((-s_x, s_y))\ar^-{\subseteq}[u]\ar^-{\subseteq}[r] & \func^{-1}((-t_x, s_y))\ar_-{\subseteq}[u]
}\]
induces an exact diagram in homology, by the Mayer-Vietoris theorem.
We then have the following decomposition result as a byproduct of
Theorem~\ref{thm:decomp_U}:
\begin{cor}\label{cor:int-modules_decomp}
  For any topological space $\sX$ and any pfd function $\func:\sX\to \sR$, the bimodule $\Hgr\circ\cS_\func$ is block decomposable, that is:
  \[
  \Hgr\circ\cS_\func \simeq \bigoplus_{\Fs\in \barcode(\Hgr\circ\cS_\func)} \field_{\Fs\cap\sU}.
  \]
Moreover, the decomposition is unique up to isomorphism and reordering of the terms (again, two blocks~$\Fs, \Fs'$ having the same intersection with~$\sU$ yield the same indecomposable $\field_{\Fs\cap\sU}=\field_{\Fs'\cap\sU}$). 
\end{cor}
This result answers a conjecture of Botnan and
Lesnick~\cite[Conjecture~8.3]{bl-aszpm-16}. Combined with
Corollary~\ref{cor:stability_pfd}, it induces a general stability
result for interlevel-sets persistence, in which the functions
considered do not have to be of {\em Morse
  type}~\cite{bl-aszpm-16,cdsm-phzp-16}:
%
\begin{cor}\label{cor:int-modules_stability}
For any pfd functions $\func, \func':\sX\to \sR$, the barcodes
$\barcode(\Hgr\circ\cS_\func)$ and $\barcode(\Hgr\circ\cS_{\func'})$ are
well-defined and we have:
\[
\dist_b(\barcode(\Hgr\circ\cS_\func),\ \barcode(\Hgr\circ\cS_{\func'})) \leq \|\func-\func'\|_\infty.
\]
\end{cor}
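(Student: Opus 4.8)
\emph{Plan.} The idea is to reduce the asserted bottleneck bound to an interleaving bound and then read off the interleaving directly from the two interlevel-sets filtrations. First, the barcodes are well defined: by Corollary~\ref{cor:int-modules_decomp} the bimodules $\Hgr(\cS_\gamma)$ and $\Hgr(\cS_{\gamma'})$ are block decomposable, and by the uniqueness part of Theorem~\ref{thm_key} (applicable over $\sU$ since $\sU$ is stable under positive translations, see Remark~\ref{rk:decomp}) their multisets of shapes are uniquely determined, hence $\barcode(\Hgr(\cS_\gamma))$ and $\barcode(\Hgr(\cS_{\gamma'}))$ make sense. Next, since both modules are exact (Mayer--Vietoris) and pfd, Corollary~\ref{cor:stability_pfd} gives $\dist_b(\barcode(\Hgr(\cS_\gamma)), \barcode(\Hgr(\cS_{\gamma'}))) = \disti(\Hgr(\cS_\gamma), \Hgr(\cS_{\gamma'}))$, so it remains to prove the interleaving bound $\disti(\Hgr(\cS_\gamma), \Hgr(\cS_{\gamma'})) \le \|\gamma - \gamma'\|_\infty$.

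For the interleaving, set $\delta = \|\gamma - \gamma'\|_\infty$ and recall that a point $s\in\sU$ is a bounded open interval $(s_x, s_y)$, the $\delta$-shift in the poset $\sU\subseteq\sRop\times\sR$ sending $s$ to the enlarged interval $s+\delta := (s_x-\delta, s_y+\delta)$, which is still nonempty and bounded (as $s_x<s_y$). The key elementary observation is that for every $x\in T$, if $\gamma(x)\in(s_x,s_y)$ then $|\gamma(x)-\gamma'(x)|\le\delta$ forces $\gamma'(x)\in(s_x-\delta, s_y+\delta)$; hence $\cS_\gamma(s)=\gamma^{-1}((s_x,s_y))\subseteq\gamma'^{-1}((s_x-\delta,s_y+\delta))=\cS_{\gamma'}(s+\delta)$, and symmetrically $\cS_{\gamma'}(s)\subseteq\cS_\gamma(s+\delta)$. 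These inclusions of subspaces of $T$ are natural in $s$, since they commute with the inclusion-induced structure maps of both filtrations, so applying $\Hgr$ yields natural transformations $\Hgr(\cS_\gamma)\Rightarrow\Hgr(\cS_{\gamma'})(\,\cdot\,+\delta)$ and $\Hgr(\cS_{\gamma'})\Rightarrow\Hgr(\cS_\gamma)(\,\cdot\,+\delta)$. Their two composites are induced respectively by the inclusions $\cS_\gamma(s)\subseteq\cS_\gamma(s+2\delta)$ and $\cS_{\gamma'}(s)\subseteq\cS_{\gamma'}(s+2\delta)$, which are exactly the internal maps of the respective bimodules over the $2\delta$-shift; thus the pair of transformations is a $\delta$-interleaving, giving $\disti(\Hgr(\cS_\gamma), \Hgr(\cS_{\gamma'}))\le\delta$. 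Combined with the previous paragraph this proves the corollary. (Alternatively, this interleaving bound may simply be quoted from~\cite{bl-aszpm-16,cdsm-phzp-16}.)

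I do not expect a genuine obstacle here; the substance of the result lies entirely in Theorem~\ref{thm_key} and Corollary~\ref{cor:stability_pfd}, and what remains is bookkeeping: confirming that $s+\delta\in\sU$ for all $s\in\sU$, that the shift just described is the one underlying the interleaving distance on $\sU$-indexed modules (after the order-reversing reparametrization of the first coordinate), and that the naturality squares and the two interleaving triangles commute — all automatic because every map in sight is induced by an inclusion of subsets of $T$ and inclusions compose strictly. The degenerate case $\gamma=\gamma'$ (i.e. $\delta=0$) is trivial.
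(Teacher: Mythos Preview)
Your proof is correct and follows the same route the paper intends: the paper presents this corollary as an immediate consequence of Corollary~\ref{cor:int-modules_decomp} combined with Corollary~\ref{cor:stability_pfd}, with the interleaving bound implicit in the citations~\cite{bl-aszpm-16,cdsm-phzp-16}. You have simply spelled out that interleaving explicitly via the inclusions $\gamma^{-1}((s_x,s_y))\subseteq\gamma'^{-1}((s_x-\delta,s_y+\delta))$, which is exactly the standard argument those references provide.
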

%
%
Alternatively, one may consider interlevel-sets filtrations obtained
by taking preimages of bounded \emph{closed} intervals---excluding
singletons
The corresponding bimodules are also indexed over~$\sU$, moreover they
are pfd and exact by the Mayer-Vietoris theorem, therefore they also
decompose as direct sums of block summands.

\subsection{$\sZ^2$-indexed modules}

Assume now that $M$ is indexed over $\sZ^2$ only, while being still
pfd and exact. Then we can extend~$M$ to $\sR^2$ by taking its left or
right Kan extension (say left)~$\bar M$. Since the inclusion
~$\sZ^2\to\sR^2$ is a fully faithful functor between posets, $\bar M$
restricts to a module isomorphic to~$M$ on~$\sZ^2$---see e.g.~\cite[Corollary
  X.3.3]{mac2013categories}. In fact, since every down-set in~$\sR^2$ has a greatest element in $\sZ^2$, $\bar M$ is piecewise constant:
\[
\forall t=(x,y)\in\sR^2,\quad \bar M_t = \varinjlim M|_{\{s\in\sZ^2 \mid s\leq t\}} \simeq M_{(\lfloor x\rfloor, \lfloor y\rfloor)}.
\]
Therefore, $\bar M$ is exact and pfd like $M$, which implies that it
decomposes into block summands, by Theorem~\ref{thm_key}. Then, by
restriction, $M$ itself also decomposes into block summands, which
yields the following result:
\begin{thm}\label{thm_key_Z2}
  For any exact pfd bimodule $M$ indexed over~$\sZ^2$, we have:
  \[
  M\simeq \bigoplus_{\Fs\in \barcode(M)} \field_{\Fs\cap\sZ^2}.
  \]
\end{thm}
Note that the above construction works as well when~$\sZ^2$ is
replaced by~$\sN^2$ or by any finite grid, leading to a similar conclusion.

\subsection{$\sZ$-indexed zigzag modules}

It is well-known that pfd zigzag modules with a countable index set
decompose as direct sums of interval modules~---~see
e.g.~\cite{botnan2015interval} for a recent treatment. This result can
 be obtained as a byproduct of our main theorem. The construction
is similar to the one in~\cite{bl-aszpm-16}:
given such a zigzag module~$M$, assume without loss of generality that
its index set is~$\sZ$, and furthermore that the arrow orientations in
the zigzag are alternating (which can be ensured by inserting in
isomorphisms at the right places):
\[\xymatrix{
\cdots & \ar[l] M_{2i-2} \ar[r] & M_{2i-1} & \ar[l] M_{2i} \ar[r] & M_{2i+1} & \ar[l] M_{2i+2} \ar[r] & \cdots
}\]
Embed then the zigzag as an infinite staircase~$S$ in the plane:
\[
\xymatrix@C=20pt{\cdots& \ar[l] \stackrel{(i-1,-i+1)}{\bullet} \ar[r] & \stackrel{(i,-i+1)}{\bullet} & \ar[l] \stackrel{(i,-i)}{\bullet} \ar[r] & \stackrel{(i+1,-i)}{\bullet} & \ar[l] \stackrel{(i+1,-i-1)}{\bullet} \ar[r] & \cdots
}\]
and realize~$M$ as a representation~$\bar M$ of that staircase by
letting $\bar M_{(i, -i)}=M_{2i}$ and $\bar M_{(i+1, -i)} = M_{2i+1}$
for all $i\in\sZ$. Then, take left and right Kan extensions of~$\bar
M$ to get a bimodule~$\bar{\bar M}$ indexed over~$\sZ^2$. As in the
previous section, this extension restricts to a zigzag module isomorphic
to~$M$ on the staircase. Moreover, since every up-set or down-set
in~$\sZ^2$ intersects only a finite portion of the staircase,
$\bar{\bar M}$ is pfd. Finally, since left (resp. right) Kan
extensions of diagrams of the form
$\xymatrix{\bullet&\ar[l]\bullet\ar[r]&\bullet}$
(resp. $\xymatrix{\bullet\ar[r]&\bullet&\ar[l]\bullet}$) are pushouts
(resp. pullbacks), and since iterated Kan extensions are also Kan
extensions, $\bar{\bar M}$ is isomorphic to a bimodule obtained by taking 
pushouts and pullbacks iteratively, which implies that~$\bar{\bar M}$ is exact. Then,
by
Theorem~\ref{thm_key_Z2} we have
\[
  \bar{\bar M}\simeq \bigoplus_{\Fs\in \barcode(\bar{\bar M})} \field_{\Fs\cap\sZ^2},
\]
and by restriction we deduce
\[
  M\simeq \bigoplus_{\Fs\in \barcode(\bar{\bar M})} \field_{\Fs\cap S},
\]
where each $\Fs\cap S$ is an interval. Hence the result.

\setcounter{section}{0}
\renewcommand\thesection{\Alph{section}}

\section{Equivalence of definitions}
\label{sec:proof-equiv}

Let $\Rs=(\lcut^+\cap \rcut^-) \times (\bcut^+ \cap \tcut^-)$ be a
rectangle, and let $t\in\Rs$.  To prove that the definitions
in~(\ref{eq:fs_ima_ker_bis}) are equivalent to those
in~(\ref{eq:fs_ima_ker}), it is sufficient to show the following
equalities:
\begin{subequations}
  \begin{equation}\label{eq:equivIma+}
    I^+_{\Rs,t} = \Ima^+_{\lcut,t} \cap \Ima^+_{\bcut,t}
  \end{equation}
  \begin{equation}\label{eq:equivIma-}
    I^-_{\Rs,t} = \Ima^-_{\lcut,t} + \Ima^-_{\bcut,t}
  \end{equation}
  \begin{equation}\label{eq:equivKer+}
    K^+_{\Rs,t} = \Ker^+_{\rcut,t} \cap \Ker^+_{\tcut,t}
  \end{equation}
  \begin{equation}\label{eq:equivKer-}
    K^-_{\Rs,t} = \Ker^-_{\rcut,t} + \Ker^-_{\tcut,t}
  \end{equation}
\end{subequations}

\bigskip
\noindent {\bf Proof of~(\ref{eq:equivIma+}):}

\begin{align*}
  I^+_{\Rs,t}
  &= \bigcap_{\begin{smallmatrix}s\in\Rs\\s\leq t\end{smallmatrix}} \Ima \rho_s^t
  = \bigcap_{\begin{smallmatrix}\lcut^+\ni x\leq t_x\\\bcut^+\ni y\leq t_y\end{smallmatrix}} \Ima \rho_{(x,y)}^t
  \stackrel{\mbox{\scriptsize (Eq. \ref{eq:weak_exactness})}}{=} \bigcap_{\begin{smallmatrix}\lcut^+\ni x\leq t_x\\\bcut^+\ni y\leq t_y\end{smallmatrix}} \Ima \rho_{(x,t_y)}^t \cap \Ima \rho_{(t_x,y)}^t \\
  &= \bigcap_{\lcut^+\ni x\leq t_x}\ \bigcap_{\bcut^+\ni y\leq t_y} \Ima \rho_{(x,t_y)}^t \cap \Ima \rho_{(t_x,y)}^t \\
  &= \bigcap_{\lcut^+\ni x\leq t_x} \left(\Ima \rho_{(x,t_y)}^t \cap \bigcap_{\bcut^+\ni y\leq t_y}  \Ima \rho_{(t_x,y)}^t\right) \\
  &= \left(\bigcap_{\lcut^+\ni x\leq t_x} \Ima \rho_{(x,t_y)}^t\right) \cap \left(\bigcap_{\bcut^+\ni y\leq t_y}  \Ima \rho_{(t_x,y)}^t\right)
  = \Ima^+_{\lcut,t} \cap \Ima^+_{\bcut,t}.
\end{align*}

\bigskip
\noindent {\bf Proof of~(\ref{eq:equivIma-}):}

\begin{align*}
  I^-_{\Rs,t}
  &= \sum_{\begin{smallmatrix}s\notin\Rs\\s\leq t\end{smallmatrix}} \Ima \rho_s^t
  = \sum_{\begin{smallmatrix}x\in\lcut^-\ \mathrm{or}\ y\in\bcut^-\\x\leq t_x\ \mathrm{and}\ y\leq t_y\end{smallmatrix}} \Ima \rho_{(x,y)}^t
  \stackrel{\mbox{\scriptsize (Eq. \ref{eq:weak_exactness})}}{=} \sum_{\begin{smallmatrix}x\in\lcut^-\ \mathrm{or}\ y\in\bcut^-\\x\leq t_x\ \mathrm{and}\ y\leq t_y\end{smallmatrix}} \Ima \rho_{(x,t_y)}^t \cap \Ima \rho_{(t_x,y)}^t \\
  &= \sum_{\begin{smallmatrix}x\in\lcut^-\\y\leq t_y\end{smallmatrix}} \Ima \rho_{(x,t_y)}^t \cap \Ima \rho_{(t_x,y)}^t   \ +\  \sum_{\begin{smallmatrix}y\in\bcut^-\\x\leq t_x\end{smallmatrix}} \Ima \rho_{(x,t_y)}^t \cap \Ima \rho_{(t_x,y)}^t\\
  &= \sum_{\begin{smallmatrix}x\in\lcut^-\\y= t_y\end{smallmatrix}} \Ima \rho_{(x,t_y)}^t \cap \Ima \rho_{(t_x,y)}^t   \ +\  \sum_{\begin{smallmatrix}y\in\bcut^-\\x= t_x\end{smallmatrix}} \Ima \rho_{(x,t_y)}^t \cap \Ima \rho_{(t_x,y)}^t\\
  &= \sum_{x\in\lcut^-} \Ima \rho_{(x,t_y)}^t    \ +\  \sum_{y\in\bcut^-} \Ima \rho_{(t_x,y)}^t
  = \Ima^-_{\lcut,t} + \Ima^-_{\bcut,t}.
\end{align*}

\bigskip
\noindent {\bf Proof of~(\ref{eq:equivKer+}):}
it is symmetric to that of~(\ref{eq:equivIma-}): 

\begin{align*}
  K^+_{\Rs,t}
  &= \bigcap_{\begin{smallmatrix}u\notin\Rs\\u\geq t\end{smallmatrix}} \Ker \rho^u_t
  = \bigcap_{\begin{smallmatrix}x\in\rcut^+\ \mathrm{or}\ y\in\tcut^+\\x\geq t_x\ \mathrm{and}\ y\geq t_y\end{smallmatrix}} \Ker \rho^{(x,y)}_t
  \stackrel{\mbox{\scriptsize (Eq. \ref{eq:weak_exactness})}}{=} \bigcap_{\begin{smallmatrix}x\in\rcut^+\ \mathrm{or}\ y\in\tcut^+\\x\geq t_x\ \mathrm{and}\ y\geq t_y\end{smallmatrix}} \Ker \rho^{(x,t_y)}_t + \Ker \rho^{(t_x,y)}_t \\
  &= \bigcap_{\begin{smallmatrix}x\in\rcut^+\\y\geq t_y\end{smallmatrix}} \Ker \rho^{(x,t_y)}_t + \Ker \rho^{(t_x,y)}_t  \ \cap\   \bigcap_{\begin{smallmatrix}y\in\tcut^+\\x\geq t_x\end{smallmatrix}} \Ker \rho^{(x,t_y)}_t + \Ker \rho^{(t_x,y)}_t\\
  &= \bigcap_{\begin{smallmatrix}x\in\rcut^+\\y= t_y\end{smallmatrix}} \Ker \rho^{(x,t_y)}_t + \Ker \rho^{(t_x,y)}_t   \ \cap\  \bigcap_{\begin{smallmatrix}y\in\tcut^+\\x= t_x\end{smallmatrix}} \Ker \rho^{(x,t_y)}_t + \Ker \rho^{(t_x,y)}_t\\
  &= \bigcap_{x\in\rcut^+} \Ker \rho^{(x,t_y)}_t   \ \cap\  \bigcap_{y\in\tcut^+} \Ker \rho^{(t_x,y)}_t
  = \Ker^+_{\rcut,t} \cap \Ker^+_{\tcut,t}.
\end{align*}

\bigskip
\noindent {\bf Proof of~(\ref{eq:equivKer-}):}
it is symmetric to that of~(\ref{eq:equivIma+}):

\begin{align*}
  K^-_{\Rs,t}
  &= \sum_{\begin{smallmatrix}u\in\Rs\\u\geq t\end{smallmatrix}} \Ker \rho^u_t
  = \sum_{\begin{smallmatrix}\rcut^-\ni x\geq t_x\\\tcut^-\ni y\geq t_y\end{smallmatrix}} \Ker \rho^{(x,y)}_t
  \stackrel{\mbox{\scriptsize (Eq. \ref{eq:weak_exactness})}}{=} \sum_{\begin{smallmatrix}\rcut^-\ni x\geq t_x\\\tcut^-\ni y\geq t_y\end{smallmatrix}} \Ker \rho^{(x,t_y)}_t + \Ker \rho^{(t_x,y)}_t \\
  &= \sum_{\rcut^-\ni x\geq t_x}\ \sum_{\tcut^-\ni y\geq t_y} \Ker \rho^{(x,t_y)}_t + \Ker \rho^{(t_x,y)}_t \\
  &= \sum_{\rcut^-\ni x\geq t_x} \left(\Ker \rho^{(x,t_y)}_t + \sum_{\tcut^-\ni y\geq t_y}  \Ker \rho^{(t_x,y)}_t\right) \\
  &= \left(\sum_{\rcut^-\ni x\geq t_x} \Ker \rho^{(x,t_y)}_t\right) + \left(\sum_{\tcut^-\ni y\geq t_y}  \Ker \rho^{(t_x,y)}_t\right)
  = \Ker^-_{\rcut,t} + \Ker^-_{\tcut,t}.
\end{align*}
%

\section{Proof of Theorem~\ref{thm:decomp_rect}}
\label{sec:proof_thm_decomp_rect}

  Write $\Rs=I\times J$, where $I,J$ are two intervals of the real line. The easiest situation is when both $I$ and $J$ are open. Then we can reparametrize each of them monotonously over the real line. The induced functor~$F$, going from the representation category of $\Rs$ to the representation category of $\sR^2$, is clearly an isomorphism of abelian categories. Moreover, the reparametrization preserves rectangles, therefore $F$ preserves the exactness property in addition to pointwise finite-dimensionality on objects. It follows that the $\sR^2$-indexed module $F(M)$ is both pfd and exact, and therefore block-decomposable by Theorem~\ref{thm_key}. The result for~$M$ follows.

  Now, in the general case, each of $I,J$ can be either open,
  right-open, left-open, or closed. In each case, monotonous
  reparametrizations as above allow us to reindex the module over some
  closed rectangle, possibly extending to infinity to the right, top,
  left, or bottom. From now on we assume without loss of generality
  that $\Rs$ is that closed rectangle. Define $\Rs^+$ to be the set
  $\{t\in\sR^2 \mid \exists s\in\Rs\ \mbox{with}\ t\leq s\}$. Note
  that $\Rs^+$ is the closed birth quadrant having the same lower-left
  corner as~$\Rs$.  Extend~$M$ to a module~$M^+$ indexed over~$\Rs^+$
  by taking its left Kan extension along the inclusion
  $\Rs\hookrightarrow \Rs^+$. Clearly, we have $M^+|_{\Rs}\simeq M$
  since the inclusion is a fully faithful functor between poset
  categories. Moreover, using the pointwise definition of the left Kan
  extension via colimits, at each point $t\in \Rs^+$ we have $M^+_t =
  \varinjlim M|_{\Rs\cap t^-}$, where $t^-$ denotes the set of points
  $s\in\sR^2$ such that $s\leq t$ (i.e. the closed death quadrant
  having $t$ as upper-right corner). Since $\Rs\cap t^-$ is a closed
  rectangle, it is a directed poset with a maximum element~$s$, so we have
  $M^+_t \simeq M_s$. As a result, $M^+$ is clearly
  pfd, and it is easily seen to be exact as well. Now we do a
  similar operation to extend~$M^+$ to a module~$M^{++}$ indexed
  over~$\sR^2$, that is, we take the right Kan extension of~$M^+$
  along the inclusion $\Rs^+\hookrightarrow\sR^2$. Again, by the fully
  faithful nature of the inclusion we have $M^{++}|_{\Rs^+} \simeq
  M^+$, and it follows from the pointwise definition of the right Kan
  extension via limits that~$M^{++}$ is both pfd and exact. Then,
  Theorem~\ref{thm_key} implies that $M^{++}$ is block-decomposable,
  from which we deduce by restriction that~$M$ itself is block-decomposable.

\section{Proof of Theorem~\ref{thm:decomp_U}}
\label{sec:proof_thm_decomp_U}

We follow the scheme of~\cite[Section~5]{botnan2018decomposition} but
do not make use of the general decomposition theorem in~\cite{botnan2018decomposition}. For any $s\leq t\in\sU$, let us call {\em $(s,t)$-square} the commutative diagram~(\ref{eq:quadrangle}), and say that it is {\em injective} if the first morphism in the exact sequence is injective, and {\em surjective} if the second morphism in the sequence is surjective.
Assume from now on that $M$ is an exact pfd bimodule indexed over~$\sU$.
%
\begin{lem}\label{lem:nontrivial_coker}
Suppose there exists some $(s,t)$-square that is not surjective. Then, we have $M\simeq M' \oplus \field_{\Fs\cap\sU}$ where $\Fs$ is a birth quadrant such that $s\notin\Fs$ and $t\in\Fs$. Similarly, if the $(s,t)$-square is not injective, then we have $M\simeq M'' \oplus \field_{\Fs\cap\sU}$ where $\Fs$ is a death quadrant such that $s\in\Fs$ and $t\notin\Fs$.  
\end{lem}
\begin{proof}
The proof is the same as the ones of Lemmas~5.12 and~5.13
in~\cite{botnan2018decomposition}, with the invocations of Lemmas~5.5
and~5.6 replaced by the one of our
Theorem~\ref{thm:decomp_rect}, observing that the only blocks $\Fs$ that can
make the $(s,t)$-square non-surjective (resp. non-injective) are the
birth quadrants $\Fs$ such that $s\notin\Fs\ni t$ (resp. death
quadrants $\Fs$ such that $s\in\Fs\not\ni t$). Note that the
conclusion of the proof of Lemma~5.13 is that there are morphisms
$\field_{\Fs\cap\sU} \hookrightarrow M \twoheadrightarrow
\field_{\Fs\cap\sU}$ composing to the identity
of~$\field_{\Fs\cap\sU}$, which implies that $M\simeq M' \oplus
\field_{\Fs\cap\sU}$. Meanwhile, the conclusion of the proof of
Lemma~5.12 is that there is an injective morphism
$\field_{\Fs\cap\sU}\hookrightarrow M$, which implies that
$M\simeq M'' \oplus \field_{\Fs\cap\sU}$ since $\field_{\Fs\cap\sU}$
is an injective module when~$\Fs$ is a death quadrant.
\end{proof}

\begin{lem}\label{lem:surjinj_invariance}
If an $(s,t)$-square is injective (resp. surjective) in~$M$, then so are all the $(s',t')$-squares for $s\leq s'\leq t'\leq t$.
\end{lem}
\begin{proof}
  We prove the injectivity part, the surjectivity part following by
  duality. First, observe that for any $t'$ such that $s\leq t'\leq t$
  we have $\Ker h_s^{(t'_x,s_y)} \cap \Ker v_s^{(s_x,t'_y)} \subseteq
  \Ker h_s^{(t_x,s_y)} \cap \Ker v_s^{(s_x,t_y)} = 0$, so we may
  assume without loss of generality that $t=t'$. Now, for any $s'$
  such that $s\leq s'\leq t$ we have the following commutative
  diagram, where $a=(s_x,t_y)$, $b=(s'_x,t_y)$, $c=(s_x, s'_y)$,
  $d=(t_x,s'_y)$, $e=(s'_x, s_y)$, $f=(t_x, s_y)$, and where every
  square is exact:
  \[\xymatrix{
    M_a \ar[r] &  M_b \ar[r] & M_t\\
    M_c \ar[u]\ar[r] & M_{s'} \ar[u]\ar[r] & M_d \ar[u]\\
    M_s \ar[u]\ar[r] & M_e \ar[u]\ar[r] & M_f \ar[u]
  } \]
  Assume for a contradiction that the $(s',t)$-square is not
  injective, which means that $\Ker h_{s'}^d \cap \Ker v_{s'}^b \neq
  0$. Take $\alpha \in (\Ker h_{s'}^d \cap \Ker v_{s'}^b) \setminus
  \{0\}$. Then, $0\in M_d$ has preimages $\alpha\in M_{s'}$ and $0\in
  M_f$, therefore the latter have a common preimage $\beta\in
  M_e$. Similarly, $\alpha\in M_{s'}$ and $0\in M_a$ have a common
  preimage $\gamma\in M_c$. Since $v_e^{s'}(\beta) = \alpha =
  h_c^{s'}(\gamma)$, $\beta$ and $\gamma$ have a common
  preimage~$\delta\in M_s$. This preimage cannot be zero since it is
  sent to~$\alpha\neq 0$ through $\rho_s^{s'}$, Meanwhile, it is sent
  to~$h_e^f(\beta) = 0\in M_f$ through $h_s^f$, and to $v_c^a(\gamma) =
  0\in M_a$ through $v_s^a$. Hence, it belongs to $\Ker h_s^f \cap
  \Ker v_s^a$, which implies that $\Ker h_s^f \cap \Ker v_s^a \neq 0$
  and thus contradicts the hypothesis that the $(s,t)$-square is
  injective.
  \end{proof}

\begin{lem}\label{lem:inj_surj}
  The module~$M$ decomposes as follows:
  \[
  M\simeq N \oplus \bigoplus_{\Fs\in \barcode} \field_{\Fs\cap\sU},
  \]
where $\barcode$ is a set of birth and death quadrants, and where every
$(s,t)$-square in~$N$ is both injective and surjective.
\end{lem}
\begin{proof}
Consider the sequence of points $(t_n)_{n\in\sN}$ defined by $t_n =
(n+1, n+1) \in\sU$. For each~$n$, take a finite zigzag
$s_n^0\leftarrow s_n^1\rightarrow s_n^2 \leftarrow \cdots \rightarrow
s_n^{r_n}$ as illustrated in Figure~\ref{fig:zz_death}, and call~$T_n$
the region bounded by~$t_n$ and this zigzag (in gray in the
figure).
\begin{figure}[htb]
  \centering
  \includegraphics[scale=0.7]{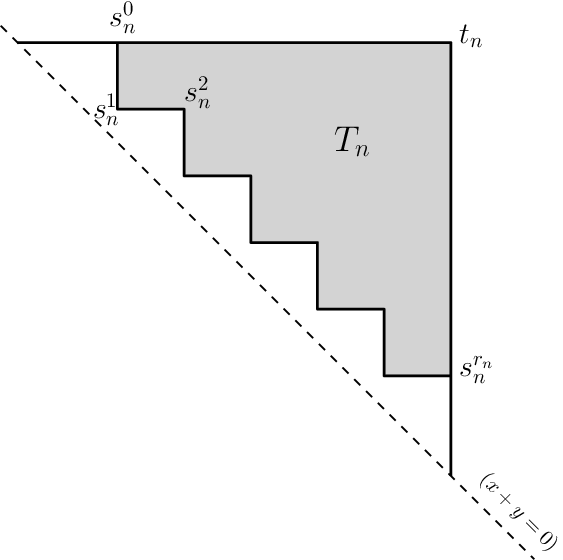}
  \caption{The region~$T_n$}
  \label{fig:zz_death}
\end{figure}
Furthermore, make sure that the zigzag converges to the
anti-diagonal $x+y=0$ as $n$ goes to infinity, in such a way that
$T_n\subseteq T_{n+1}$ for all $n\in\sN$.

Then, consider the sequence of submodules of~$M$ obtained  as follows:
\begin{itemize}
\item $M_0=M$;
\item For $n\geq 1$, while there exist internal summands of~$M_{n-1}$
  that are isomorphic to~$\field_{\Fs\cap\sU}$ for~$\Fs$ a birth
  (resp. death) quadrant with $s_n^1\notin\Fs\ni t_n$
  (resp. $s_n^1\in\Fs\not\ni t_n$), peel off such a summand
  from~$M_{n-1}$; then do the same for the $(s_n^3,t_n)$-square, and
  so on, until the $(s_{r_n-1},t_n)$-square. This procedure terminates
  because each square considered is finite-dimensional (hence finitely
  many internal summands are peeled off for that square) and there are
  finitely many such squares. Call $M_n$ the resulting submodule
  of~$M_{n-1}$.  Lemma~\ref{lem:nontrivial_coker} ensures that each
  square that has been considered is both injective and surjective
  in~$M_{n}$, and therefore so is every $(s',t')$-square with
  $s'\leq t' \in T_n$ by Lemma~\ref{lem:surjinj_invariance}.
\end{itemize}
We thus get a decreasing family $(M_n)_{n\in \sN}$ of submodules
of~$M$. Since~$M$ is pfd, this family converges pointwise to some
submodule~$N$. Every $(s,t)$-square in~$N$ is both injective and
surjective, because it belongs to~$T_n$ for some finite index~$n$ and
for all the indices beyond.
\end{proof}

To conclude the proof of Theorem~\ref{thm:decomp_U}, we must show that
the submodule~$N$ defined above is block-decomposable. The argument is
in fact the same as in the conclusion of the proof of Theorem~5.11
in~\cite{botnan2018decomposition}: using the prior that every square
is now both injective and surjective, we apply Corollary~5.9
from~\cite{botnan2018decomposition} (whose proof uses our main
Theorem~\ref{thm_key} and not the general decomposition theorem
in~\cite{botnan2018decomposition}) to get a direct-sum decomposition
of~$N$ into summands that are obtained from Kan extensions of interval
summands of a certain zigzag in the plane, each such extension giving
rise to a particular block module.

\bibliography{biblio} \bibliographystyle{plain}

\end{document}